\newcommand{\ignore}[1]{}
\newtheorem{theorem}{Theorem}[section]
\newtheorem{lemma}[theorem]{Lemma}
\newtheorem{problem}[theorem]{Problem}
\newcommand{\Proof}[1]
        {
        \noindent
        \emph{Proof #1.}~
        }
\newsavebox{\smallProofsym}                     
\newcommand{\smalleop}[1]
        {
        \mbox{} \hfill #1~~\usebox{\smallProofsym}\!\!\!\!\!\!\
        }
\newcommand{\parag}[1]{\vspace{2mm}

\noindent{\bf #1} }
\newcommand{\NN}{\ensuremath{\mathbb N}}
\newcommand{\QQ}{\ensuremath{\mathbb Q}}
\newcommand{\ZZ}{\ensuremath{\mathbb Z}}
\newcommand{\RR}{\ensuremath{\mathbb R}}
\newcommand{\pts}{\mathcal P}
\newcommand{\lines}{\mathcal L}
\newcommand{\GCD}{\mathrm{gcd}}
\def\eps{{\varepsilon}}
\DeclarePairedDelimiter{\floor}{\lfloor}{\rfloor}
\begin{document}
\pagenumbering{arabic}

\title{Structural Szemer\'edi-Trotter for Lattices and their Generalizations\thanks{Part of the research work on this project was done as part of the 2021 NYC Discrete Math REU, funded by NSF award DMS-2051026.} }

\author{
Shival Dasu\thanks{Department of Mathematics, Indiana University, IN, USA. {\sl sdasu@iu.edu}}
\and
Adam Sheffer\thanks{Department of Mathematics, Baruch College, City University of New York, NY, USA.
{\sl adamsh@gmail.com}. Supported by NSF award DMS-1802059 and by PSCCUNY award 63580-00-51.}
\and
Junxuan Shen\thanks{California Institute of Technology, CA, USA, {\sl jshen@caltech.edu}. Supported by the Chung Ip Wing-Wah Memorial SURF Fellowship.}}

\date{}

\maketitle

\begin{abstract}
We completely characterize point--line configurations with $\Theta(n^{4/3})$ incidences when the point set is a section of the integer lattice. 
This can be seen as the main special case of the structural Szemer\'edi-Trotter problem. 
We also derive a partial characterization for several generalizations: (i) We rule out the concurrent lines case when the point set is a Cartesian product of an arithmetic progression and an arbitrary set. (ii) We study the case of a Cartesian product where one or both sets are generalized arithmetic progression. 
Our proofs rely on deriving properties of multiplicative energies.

\end{abstract}

\section{Introduction}

The Szemer\'edi--Trotter theorem \cite{SzemTrot83} is a central result in discrete geometry. 
It is an unusually helpful result, which is used in combinatorics, theoretical computer science, harmonic analysis, number theory, model theory, and more (for a few examples, see \cite{BomBour15,Bourgain05,CGS20,Demeter14}). 
Since this central result has been known for over 40 years, it is surprising that not much is known about the structural problem.
That is, not much is known about characterizing when this result is tight. 
A recent work of Silier and the second author introduced a new approach for addressing this structural problem \cite{SS21}.
In the current work, we further develop this approach, obtaining structural results in several special cases.

Throughout this paper, we work in $\RR^2$.
Consider a point set $\pts$ and a set of lines $\lines$.
A pair $(p,\ell)\in \pts\times \lines$ is an \emph{incidence} if the point $p$ is on the line $\ell$.
Let $I(\pts,\lines)$ be the number of incidences in $\pts\times\lines$.

\begin{theorem}[Szemer\'edi--Trotter] \label{th:SzemTrot}
Every finite point set $\pts$ and finite set of lines $\lines$ satisfy
\[ I(\pts,\lines) = O(|\pts|^{2/3}|\lines|^{2/3}+|\pts|+|\lines|). \]
\end{theorem}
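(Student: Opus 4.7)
The plan is to prove Theorem \ref{th:SzemTrot} via the now-standard crossing-number argument of Sz\'ekely, since it gives the cleanest path and avoids any combinatorial preprocessing beyond a trivial split. First I would dispose of the trivial contribution: every line containing at most one point of $\pts$ contributes at most one incidence, so such lines contribute at most $|\lines|$ incidences in total. Restricting attention to lines with at least two incidences, let $k_\ell$ denote the number of points of $\pts$ on $\ell$ and write $m = I(\pts,\lines) = \sum_{\ell\in \lines} k_\ell$.

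Next I would build a topological graph $G$ drawn in $\RR^2$: take $\pts$ as the vertex set, and for each line $\ell$ join consecutive points of $\pts$ on $\ell$ by the segment of $\ell$ between them. A line with $k_\ell\geq 2$ incidences contributes $k_\ell - 1$ edges, so $G$ has $v = |\pts|$ vertices and $e = \sum_\ell(k_\ell-1) \geq m - |\lines|$ edges. Since two distinct lines meet in at most one point, any crossing in this drawing comes from a pair of distinct lines, so the crossing number of $G$ satisfies $\mathrm{cr}(G) \leq \binom{|\lines|}{2} = O(|\lines|^2)$.

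Then I would apply the crossing number inequality of Ajtai--Chv\'atal--Newborn--Szemer\'edi and Leighton: every simple topological graph with $v$ vertices and $e \geq 4v$ edges satisfies $\mathrm{cr}(G) = \Omega(e^3/v^2)$. Combining with the upper bound on crossings gives $e^3 = O(|\pts|^2 |\lines|^2)$, hence $m \leq e + |\lines| = O(|\pts|^{2/3}|\lines|^{2/3} + |\lines|)$. In the degenerate regime $e < 4v$, we instead get $m \leq 4|\pts| + |\lines|$, which is already absorbed by the claimed bound.

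The only nontrivial ingredient is the crossing number inequality; once that is invoked, the rest of the argument is routine bookkeeping, and there is no genuine obstacle. A remark I would add is that the same argument extends verbatim when $\lines$ is replaced by any family of simple curves such that any two members meet in $O(1)$ points and any two points determine $O(1)$ curves, which will be relevant to the generalizations discussed later in the paper.
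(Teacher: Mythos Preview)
Your argument is the standard Sz\'ekely crossing-number proof and is correct as written (modulo a harmless ambiguity in whether $m$ counts all incidences or only those from lines with $k_\ell\ge 2$; either reading leads to the same bound after absorbing the $|\lines|$ term). However, there is nothing to compare against: the paper does not prove Theorem~\ref{th:SzemTrot}. It is stated as a classical result with a citation to the original Szemer\'edi--Trotter paper~\cite{SzemTrot83} and then used as a black box; the paper's own contributions concern the \emph{structural} question of when the bound is tight, not the bound itself. So your proposal supplies a proof the paper simply does not attempt, and the closing remark about extending to curves with $O(1)$ pairwise intersections, while true, is not actually used anywhere later in the paper.
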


In the bound of Theorem \ref{th:SzemTrot}, the cases where the linear terms dominate are considered as non-interesting.
For example, the term $|\pts|$ dominates when the number of points is larger than the square of the number of lines. 
The important term in the bound of Theorem \ref{th:SzemTrot} is $|\pts|^{2/3}|\lines|^{2/3}$.
The \emph{structural Szemer\'edi--Trotter problem} asks to characterize the sets of points and lines that form $\Theta(|\pts|^{2/3}|\lines|^{2/3})$ incidences.\footnote{Some authors refer to this as the the \emph{inverse} Szemer\'edi--Trotter problem.}
Even after decades of work, it is difficult to make conjectures about such sets. 
Below, we highlight some main open questions as \emph{problems}. 

When studying the structural Szemer\'edi--Trotter problem, it is common to assume that $|\pts|=|\lines|=n$, for simplicity.
In this case, we consider sets with $\Theta(n^{4/3})$ incidences. 
For $n\in \NN$, we set $[n] = \{1,2,\ldots,n\}$.
Erd\H os showed how to obtain $\Theta(n^{4/3})$ incidences when $\pts = [n^{1/2}]\times [n^{1/2}]$.
Decades later, Elekes \cite{Elekes01} showed how to obtain $\Theta(n^{4/3})$ incidences when $\pts = [n^{1/3}]\times [n^{2/3}]$.
Recently, it was shown in \cite{SS21} how to obtain $\Theta(n^{4/3})$ incidences when $\pts = [n^\alpha]\times [n^{1-\alpha}]$, for every $1/3\le \alpha \le 1/2$.
This infinite family of constructions includes the constructions of Erd\H os and Elekes.

When considering the above, one might conjecture that the point set must be a Cartesian product of arithmetic progressions. 
We can distort such a Cartesian product by applying projective transformations, replacing a fraction of the points, and applying other simple transformations. 
Recently, Guth and Silier \cite{GS21} introduced a configurations with $\Theta(n^{4/3})$ incidences where the point set is not a Cartesian product of arithmetic progressions, also after various simple transformation. 
Instead, the point set is a Cartesian product $A\times A$, where $A$ is a generalized arithmetic progression of dimension two (for a definition, see \eqref{eq:GAPdef} below). 
In a personal communication, Max Aires explained to us how to obtain constructions with generalized arithmetic progressions of any constant dimension. 

\begin{problem} \label{pr:GeneralizedLattice}
Consider a set $\pts$ of $n$ points and a set $\lines$ of $n$ lines, such that $I(\pts,\lines)=\Theta(n^{4/3})$. 
Must $\pts$ be a Cartesian product of constant-dimension generalized arithmetic progressions, possibly after a simple transformation?
\end{problem}

Silier and the second author \cite{SS21} proved the following structural result for Cartesian products.
Here $E^\times(S)$ stands for the multiplicative energy of $S$.
For a discussion of additive and multiplicative energies, see Section \ref{sec:MultEnergy}.

\begin{theorem} \label{th:StructuralSzemTrot} $\qquad$\\
(a) For $1/3<\alpha< 1/2$, let $A,B\subset \RR$ satisfy $|A|=n^\alpha$ and $|B|=n^{1-\alpha}$.
Let $\lines$ be a set of $n$ lines in $\RR^2$, such that $I(A\times B,\lines) = \Theta(n^{4/3})$.
Then at least one of the following holds:
\begin{itemize}
\item There exists $1-2\alpha\le \beta \le 2/3$ such that $\lines$ contains $\Omega(n^{1-\beta}/\log n)$ families of $\Theta(n^\beta)$ parallel lines, each with a different slope.
\item There exists $1-\alpha\le \gamma \le 2/3$ such that $\lines$ contains $\Omega(n^{1-\gamma}/\log n)$ disjoint families of $\Theta(n^\gamma)$ concurrent lines, each with a different center.
\end{itemize}
In either case, all the lines in these families are incident to $\Theta(n^{1/3})$ points of $A\times B$.\\[2mm]
(b) Assume that we are in the case of $\Omega(n^{1-\beta}/\log n)$ families of $\Theta(n^\beta)$ parallel lines. 
There exists $n^{2\beta} \le t \le n^{3\beta}$ such that, for $\Omega(n^{1-\beta}/\log^2 n)$ of these families, the additive energy of the $y$-intercepts is $\Theta(t)$.
Let $S$ be the set of slopes of these families. 
Then
\[ E^{\times} (S) \cdot t = \Omega(n^{3-\alpha}/ \log^{12}n). \]
\end{theorem}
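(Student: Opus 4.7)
My plan is to count in two ways the octuples $((a_i,a_i'))_{i=1}^4 \in (A\times B)^4$ satisfying the linear equation $a_1'+a_2'-a_3'-a_4' = s\,(a_1+a_2-a_3-a_4)$ for $s \in S$. First I fix the energy level $t$ via a dyadic pigeonhole: the additive energy $E^+(B_s)$ of the intercept set $B_s$ of the family of slope $s$ lies in $[n^{2\beta}, n^{3\beta}]$, a range covered by $O(\log n)$ dyadic bins. Pigeonholing among the $\Omega(n^{1-\beta}/\log n)$ families given by Theorem~\ref{th:StructuralSzemTrot}(a) extracts $\Omega(n^{1-\beta}/\log^2 n)$ slopes sharing a common dyadic energy $\Theta(t)$. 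Relabeling these slopes as $S$ produces the claimed $t$ and size bound on $S$.

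Next I will unfold the hypothesis $E^+(B_s)=\Theta(t)$ into a count on $(A\times B)^4$. Each line in the family of slope $s$ meets $A\times B$ in $\Theta(n^{1/3})$ points, so every intercept $b \in B_s$ has $\Theta(n^{1/3})$ representations $b = a'-s a$ with $(a,a')\in A\times B$. Lifting the $\Theta(t)$ additive quadruples in $B_s$ through these representations yields $\Theta(n^{4/3}\,t)$ octuples in $(A\times B)^4$ satisfying the displayed equation. Summing over $s \in S$, the total count $N$ of valid pairs $(s, (a_i,a_i'))$ is at least $c\,|S|\,n^{4/3}\,t$.

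The main step is an upper bound on $N$ that introduces $E^\times(S)$. Setting $u = a_1+a_2-a_3-a_4$ and $v = a_1'+a_2'-a_3'-a_4'$, I separate the contribution with $u=0$ (which forces $v=0$ and is bounded by $|S|\cdot E^+(A)\cdot E^+(B)$) from the generic contribution $N_1 = \sum_{s\in S}\sum_{u\neq 0} r_A(u)\,r_B(su)$, where $r_A(u) = |\{(a_1,\ldots,a_4)\in A^4 : a_1+a_2-a_3-a_4 = u\}|$ and $r_B$ is defined analogously. The quantity $N_1$ is a weighted incidence count between the points $(u,v) \in \RR^2$ (with weight $r_A(u)\,r_B(v)$) and the lines through the origin with slopes in $S$. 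The plan is to apply Cauchy--Schwarz to expand $N_1^2$ into a sum over pairs $(s,s') \in S^2$, change variables so that the dependence factors through the ratio $r = s/s'$ (producing the multiplicity $m_\times(r) = |\{(s,s') \in S^2 : s/s' = r\}|$ whose second moment is exactly $E^\times(S)$), and then apply a second Cauchy--Schwarz on $r$ to extract $\sqrt{E^\times(S)}$ as a factor. The residual sums in $r_A,r_B$ are controlled via the standard bound $\sum_u r_A(u)^2 \le E^+(A)\cdot |A|^4$ and its $B$-side analogue, combined with additional dyadic pigeonholing on the level sets of $r_A, r_B$. Equating the lower bound $N \ge c\,|S|\,n^{4/3}\,t$ with this upper bound and rearranging yields $E^\times(S)\cdot t = \Omega(n^{3-\alpha}/\log^{12}n)$, the $\log^{12}$ overhead accumulating from the several dyadic pigeonholes and Cauchy--Schwarz losses.

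The main obstacle will be executing the nested Cauchy--Schwarz in the third step so that $E^\times(S)$ emerges with the correct exponent while the $A$- and $B$-side energy sums combine cleanly to yield $n^{3-\alpha}$ on the right-hand side. A less careful execution yields only weak bounds such as $E^\times(S) \ge |S|^2$, which is insufficient throughout the relevant parameter range. A secondary technical concern is ensuring that the $u=0$ contribution is dominated by the generic one: when $E^+(A)\cdot E^+(B)$ is close to its maximum $n^3$ (as happens when both $A$ and $B$ are near arithmetic progressions), one may have to treat this regime separately, though in that regime $E^\times(S)$ itself tends to be large due to the arithmetic structure inherited from $A$ and $B$, so the final bound can still be recovered.
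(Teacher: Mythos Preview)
The paper does not prove Theorem~\ref{th:StructuralSzemTrot}. It is quoted from \cite{SS21} (``Silier and the second author \cite{SS21} proved the following structural result\ldots''), and the only remark the present paper makes about its proof is that the clause ``all lines are incident to $\Theta(n^{1/3})$ points'' is immediate from the first line of the argument in \cite{SS21}. There is therefore no proof in this paper against which your proposal can be compared.

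As for the proposal itself: what you have written is a strategy outline, not a proof, and you correctly identify that the crucial step is unfinished. Your third paragraph sets up the double Cauchy--Schwarz to extract $E^\times(S)^{1/2}$ from the bilinear sum $\sum_{s\in S}\sum_{u\neq 0} r_A(u)\,r_B(su)$, but you do not carry it out, and you explicitly note that a na\"ive execution gives only the trivial $E^\times(S)\ge |S|^2$. Without actually performing the change of variables, controlling the diagonal terms, and tracking the exponents through the two Cauchy--Schwarz applications to arrive at $n^{3-\alpha}$, this remains a plan rather than an argument. The secondary concern you raise about the $u=0$ contribution is also left unresolved: saying that ``in that regime $E^\times(S)$ itself tends to be large'' is a heuristic, not a bound. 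If you want this to stand as a proof you would need to (i) execute the nested Cauchy--Schwarz with explicit bookkeeping so the reader can verify the exponent $3-\alpha$ and the $\log^{12}$ overhead, and (ii) either bound the $u=0$ term directly or give a separate argument for the regime where it dominates.
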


In \cite{SS21}, part (a) of the theorem does not mention that all lines are incident to $\Theta(n^{1/3})$ points of $A\times B$.
This is immediate from the first line of the proof of that theorem and is important for the current work. 
With this property in mind, we say that a line is \emph{proper with respect to} $\pts$ if it is incident to $\Theta(n^{1/3})$ points of $\pts$.
When the point set is clear from the context, we say that the line is \emph{proper} and omit the point set.
Intuitively, part (b) of Theorem \ref{th:StructuralSzemTrot} states that, either the slopes have a high multiplicative energy or the $y$-intercepts have a high additive energy.\footnote{Intuitively and not rigorously, a set with a large additive energy behaves similarly to an arithmetic progression. 
A set with a large multiplicative energy behaves similarly to a geometric progression.}

Recently, Katz and Silier \cite{KS23} provided a very different approach for studying the structural Szemer\'edi-Trotter problem.
So far, the results that were produced by the two approaches do not overlap. 

\parag{Our results.}
We provide a stronger characterization of configurations with $\Theta(n^{4/3})$ incidences, in several of the main special cases. 
We say that a Cartesian product $A \times B$ is a \emph{lattice} if both $A$ and $B$ are arithmetic progressions. 
Until recently, all known configurations with $\Theta(n^{4/3})$ incidences were lattices. 
We completely characterize this case.
We define the set $\NN$ of natural numbers as not including zero.
For $i,j\in \NN$, let $\gcd(i,j)$ denote the greatest common divisor of $i$ and $j$.

\begin{theorem} \label{th:Lattice}
For a fixed $1/3<\alpha<1/2$, let $\lines$ be a set of $n$ lines such that $I([n^\alpha]\times [n^{1-\alpha}],\lines) = \Theta(n^{4/3})$.
Then 
\begin{itemize}
	\item Any concurrent family of proper lines in $\lines$ is of size $o(n^{1/3})$.\footnote{We recall that $A=o(B)$ means ``A is asymptotically smaller than $B$.'' The more common notation $A=O(B)$ means ``A is asymptotically smaller or equal to $B$.''}
    \item There exist $\Theta(n^{1/3})$ parallel families of $\Theta(n^{2/3})$ proper lines. 
    \item The slopes of the $\Theta(n^{1/3})$ parallel families are a constant portion of 
    \begin{align*}
    &\left\{\pm s/t\ :\ s,t\in \NN,\ \gcd(s,t)=1,\ t=\Theta(n^{\alpha-1/3}),\ s\le t\cdot n^{1-2\alpha}\right\}\bigcup\\[2mm]
	&\hspace{20mm}\left\{\pm s/t\ :\ s,t\in \NN,\ \gcd(s,t)=1,\ s=\Theta(n^{2/3-\alpha}),\ t\le s/ n^{1-2\alpha}\right\}.    
    \end{align*}
    \item Assume that a proper line contains at least $n^{1/3}/k$ points. Then the $y$-intercepts of a parallel family with slope $s/t$ form a constant portion of the set
    \[ \left\{j-i\cdot \frac{s}{t}\ :\  i\in [t],\ j\in [n^{1-\alpha}-sn^{1/3}/k],\ \text{ or }\ i\in [n^\alpha-tn^{1/3}/k],\ j\in [s]\right\}.\]    
\end{itemize}
\end{theorem}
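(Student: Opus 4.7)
The plan is to combine Theorem \ref{th:StructuralSzemTrot}'s dichotomy with the arithmetic restrictions that the integer lattice imposes on slopes of lines containing $\Theta(n^{1/3})$ lattice points, and to translate these into the precise description of slopes and $y$-intercepts stated in the theorem.

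\textbf{Slope classification and dichotomy.} A line $y=(s/t)x+b$ with $\gcd(s,t)=1$ meets $[n^\alpha]\times[n^{1-\alpha}]$ in $\Theta(\min\{n^\alpha/|t|,\,n^{1-\alpha}/|s|\})$ lattice points when it crosses the rectangle. Requiring this to equal $\Theta(n^{1/3})$ forces exactly the two cases in the third bullet, whose union $S$ of admissible slopes has $|S|=\Theta(n^{1/3})$ by a standard sum of $\phi(t)/t$. Now apply Theorem \ref{th:StructuralSzemTrot}(a). In its concurrent branch, some point lies on $\Theta(n^\gamma)$ proper lines with $\gamma\ge 1-\alpha>1/2$, but these lines have pairwise distinct slopes, all lying in $S$, so $n^\gamma\le |S|=O(n^{1/3})$, contradicting $\gamma>1/3$. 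Hence the parallel branch must hold: $\Omega(n^{1-\beta}/\log n)$ parallel families of $\Theta(n^\beta)$ proper lines, each with a distinct slope in $S$.

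\textbf{Fixing $\beta=2/3$ and the $y$-intercepts.} Comparing the family count with $|S|=O(n^{1/3})$ yields $\beta\ge 2/3-o(1)$, while counting $y$-intercepts (below) gives $\beta\le 2/3$. Thus $\beta=2/3$, producing the second and third bullets (with the slopes necessarily a constant portion of $S$, since $|S|=\Theta(n^{1/3})$ is matched). For the intercepts: for fixed $s/t\in S$, a line $y=(s/t)x+b$ contains a lattice point iff $tb\in\ZZ$, and writing $tb=tj-si$ identifies the lattice points on it as $(i+\ell t,\,j+\ell s)$ for $\ell\in\ZZ$. Demanding that at least $n^{1/3}/k$ of these lie in $[n^\alpha]\times[n^{1-\alpha}]$ splits into two subcases according to whether the line enters the rectangle through its left/right edge or through its top/bottom edge, yielding exactly the two-part intercept set of the fourth bullet.

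\textbf{First bullet (the main obstacle).} The slope classification immediately bounds any concurrent family of proper lines by $|S|=O(n^{1/3})$, but the theorem demands the asymptotically stronger $o(n^{1/3})$. The hard part is to show that, for any common point $p$, only a vanishing fraction of slopes $s/t\in S$ can simultaneously yield a line through $p$ whose $y$-intercept also lies in the structured set of the fourth bullet. The argument should combine a Diophantine constraint $ty_0-sx_0\in\ZZ$ (which already eliminates almost every slope once $p$ is not a rational point of small denominator) with the margin terms $sn^{1/3}/k$ and $tn^{1/3}/k$ in the intercept sets (which exclude slopes whose line through $p$ sits too near the rectangle boundary), to force the count strictly below $n^{1/3}$. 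Converting the first-order slope count $O(n^{1/3})$ into this genuinely sub-$n^{1/3}$ bound is expected to be the most delicate step of the proof.
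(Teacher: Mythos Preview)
Your route diverges from the paper's in a fundamental way: the paper does \emph{not} invoke Theorem~\ref{th:StructuralSzemTrot} at all. It proceeds by pure direct counting. From the observation that a line with reduced slope $s/t$ meets $[n^\alpha]\times[n^{1-\alpha}]$ in at most $\min(\lceil n^\alpha/t\rceil,\lceil n^{1-\alpha}/s\rceil)$ points, the paper deduces that any slope admitting a proper line satisfies $t\le kn^{\alpha-1/3}$ (non-steep case) or $s\le kn^{2/3-\alpha}$ (steep case), and then sums Euler's totient function (Lemma~\ref{eq:TotientProps}) to show there are $O(n^{1/3})$ such slopes in total. This alone bounds every concurrent family of proper lines by $O(n^{1/3})$. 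Bullets~2 and~3 are then obtained by a second direct count: for each admissible slope $s/t$ the paper enumerates the proper lines explicitly (by scanning the $t$ leftmost columns and $s$ bottom rows) and finds that a non-steep slope with $t=o(n^{\alpha-1/3})$ supports only $\Theta(st\cdot n^{1/3})=o(n^{2/3})$ proper lines; since there are $O(n^{1/3})$ slopes, these contribute $o(n^{4/3})$ incidences. Hence the $\Theta(n^{4/3})$ incidences force $\Theta(n^{1/3})$ slopes with $t=\Theta(n^{\alpha-1/3})$ (resp.\ $s=\Theta(n^{2/3-\alpha})$), each supporting $\Theta(n^{2/3})$ proper lines. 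This direct argument is what removes the logarithms; your detour through Theorem~\ref{th:StructuralSzemTrot}(a) would leave you with $\Omega(n^{1-\beta}/\log n)$ families and no clean way to reach the log-free $\Theta(n^{1/3})$ of bullet~2.

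Two further remarks. First, you misidentify the set $S$: the set in bullet~3 consists of the \emph{rich} slopes (those with $\Theta(n^{2/3})$ proper lines), not all slopes admitting a proper line. The latter set is larger---it allows any $t\le kn^{\alpha-1/3}$, not only $t=\Theta(n^{\alpha-1/3})$---and it is this larger set that bounds concurrent families. Second, you single out the $o(n^{1/3})$ in bullet~1 as ``the main obstacle'' and sketch a Diophantine argument to upgrade $O(n^{1/3})$ to $o(n^{1/3})$. The paper's own proof establishes only $O(n^{1/3})$ (it says so explicitly), so your proposed refinement is not part of the paper's argument; the discrepancy between the stated $o(n^{1/3})$ and the proved $O(n^{1/3})$ is already present in the source.
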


Every lattice can be transformed to the form $[n^\alpha]\times [n^{1-\alpha}]$ with a translation and scaling along the axes.
Such transformations maintain lines and incidences.
When $\alpha<1/3$, the number of incidences is $o(n^{4/3})$.
Thus, in the case of a lattice, Theorem \ref{th:Lattice} implies that we cannot have even one large concurrent family of proper lines. 
We are not aware of any configuration that satisfies the concurrent case of Theorem \ref{th:StructuralSzemTrot}(a). 
It is tempting to conjecture that this case should be removed from Theorem \ref{th:StructuralSzemTrot}(a).

\begin{problem}
Can the case of many concurrent families be removed from the statement of Theorem \ref{th:StructuralSzemTrot}(a)?
\end{problem}

Theorem \ref{th:Lattice} also states that, in the case of a lattice, there are always $\Theta(n^{1/3})$ families of $\Theta(n^{2/3})$ parallel lines. 
In the context of Theorem \ref{th:StructuralSzemTrot}(a), we have that $\beta=2/3$ and that the logarithm can be removed from the bounds of the theorem.  
This is also the case in all of the constructions that we are aware of. 

\begin{problem}
In Theorem \ref{th:StructuralSzemTrot}(a), do we always have $\beta=2/3$? 
Can we always remove the logarithms from the bounds?
\end{problem}

We also study natural generalizations of the lattice case. 
First, we say that a Cartesian product $A \times B$ is a \emph{half-lattice} if at least one of $A$ and $B$ is an arithmetic progression. 
We prove the following result for half-lattices. 

\begin{theorem} \label{th:HalfLatticeConcurrentOnLattice}
Consider $\alpha>1/3$ and $B\subset \RR$ such that $|B|=n^{1-\alpha}$. 
Then the concurrent case of Theorem \ref{th:StructuralSzemTrot}(a) cannot occur with the half-lattice $[n^\alpha] \times B$.
\end{theorem}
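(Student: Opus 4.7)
The plan is to assume, for contradiction, that the concurrent case of Theorem \ref{th:StructuralSzemTrot}(a) occurs for $\pts = [n^\alpha]\times B$ and to derive a multiplicative energy bound that conflicts with the arithmetic structure of $[n^\alpha]$. By the reflection $(x,y)\mapsto(y,x)$, which preserves lines and incidences, I may assume $\alpha\le 1/2$; combined with $\alpha>1/3$ this places $\alpha\in(1/3,1/2]$, and the parameter $\gamma$ of Theorem~\ref{th:StructuralSzemTrot}(a) then satisfies $\gamma\ge 1-\alpha\ge 1/2$.

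Fix a center $c=(c_x,c_y)$ with its concurrent family of $\Theta(n^\gamma)$ proper lines. Each such line has slope $m$ satisfying $\bigl|B\cap\bigl(m[n^\alpha]+(c_y-mc_x)\bigr)\bigr|=\Theta(n^{1/3})$, which rewrites as the statement that $m$ admits $\Theta(n^{1/3})$ representations $m=(b-c_y)/(j-c_x)$ with $b\in B$, $j\in[n^\alpha]$. Summing $r(m)^2$ over the $\Theta(n^\gamma)$ proper slopes at $c$ yields the multiplicative energy lower bound
\[
E^\times\!\bigl(B-c_y,\,([n^\alpha]-c_x)^{-1}\bigr)\;\ge\;\Omega(n^{\gamma+2/3}).
\]
The key observation is that this energy equals the number of collinear triples $(c,p_1,p_2)$ with $p_1,p_2\in\pts$, since the relation $(b_1-c_y)(j_2-c_x)=(b_2-c_y)(j_1-c_x)$ is precisely collinearity of $c,(j_1,b_1),(j_2,b_2)$. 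Summing over the $\Omega(n^{1-\gamma}/\log n)$ centers produces $\Omega(n^{5/3}/\log n)$ collinear triples $(c,p_1,p_2)$ with $c\in C$ and $p_1,p_2\in\pts$.

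Two upper bounds are then combined to reach a contradiction. First, Cauchy--Schwarz with the trivial $E^\times(B-c_y)\le|B|^3=n^{3(1-\alpha)}$ and the classical divisor-function estimate $E^\times([n^\alpha])=O(n^{2\alpha}\log n)$ gives $E^\times(B-c_y,([n^\alpha]-c_x)^{-1})\le O(n^{(3-\alpha)/2}\sqrt{\log n})$, which forces $\gamma\le 5/6-\alpha/2+o(1)$ and rules out $\gamma$ close to $2/3$. Second, to cover the residual range $\gamma\in[1-\alpha,\,5/6-\alpha/2]$, I upper bound the aggregate collinear-triple count by a weighted incidence sum $\sum_\ell|C\cap\ell|\cdot|\pts\cap\ell|^2$, applying a dyadic decomposition on the line richness $k_\ell=|\pts\cap\ell|$, Szemer\'edi--Trotter applied both to $(\pts,\lines)$ and to $(C,\lines)$, and the product constraint $k_\ell\le n^\alpha$ (which follows from $x$-coordinates lying in $[n^\alpha]$). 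The principal obstacle is making this weighted incidence bound sharp enough to beat $\Omega(n^{5/3}/\log n)$ uniformly across the residual range of $\gamma$; overcoming it likely requires a sum--product style improvement of $E^\times$ for reciprocals of arithmetic progressions, rather than the trivial $|B|^3$ bound on $E^\times(B-c_y)$.
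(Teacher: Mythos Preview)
Your lower bound is exactly right and matches the paper: a single concurrent family of $\Theta(n^\gamma)$ proper lines through $c$ forces
\[
E^\times\bigl(B-c_y,\;[n^\alpha]-c_x\bigr)\;=\;\Omega(n^{\gamma+2/3}).
\]
The gap is entirely in the upper bound. Your Cauchy--Schwarz step,
\[
E^\times(B-c_y,[n^\alpha]-c_x)\;\le\;\sqrt{E^\times(B-c_y)}\cdot\sqrt{E^\times([n^\alpha]-c_x)}\;\le\;n^{3(1-\alpha)/2}\cdot O(n^{\alpha}\sqrt{\log n}),
\]
yields only $\gamma\le 5/6-\alpha/2+o(1)$. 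For every $\alpha\in(1/3,1/2)$ one has $5/6-\alpha/2>1-\alpha$, so the entire interval $[1-\alpha,\,5/6-\alpha/2]$ survives; your second approach via aggregated collinear triples and weighted Szemer\'edi--Trotter does not close this interval either, as you yourself note.

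The paper avoids Cauchy--Schwarz here entirely. Its Theorem~\ref{th:multEnergy} proves directly that for \emph{any} $A\subset\RR$ with $|A|=n^{1-\alpha}$ and any shift $x$,
\[
E^\times\bigl(A,\;[n^\alpha]+x\bigr)\;=\;O\bigl(n^{1+\eps}+n^{2(1-\alpha)}\bigr),
\]
with the three cases $x=0$, $x\in\QQ$, $x\notin\QQ$ treated separately. Plugging this in gives $\gamma+2/3\le\max\{1+\eps,\,2-2\alpha\}$, hence $\gamma<1-\alpha$ once $\alpha>1/3$, and the proof is done from a \emph{single} center---no summation over centers, no second incidence argument. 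This bound is exactly the ``sum--product style improvement'' you flagged as missing; the point is that the arithmetic progression structure of $[n^\alpha]$ is exploited not through $E^\times([n^\alpha])$ but through the asymmetric energy $E^\times(A,[n^\alpha])$, where the divisor-counting argument (Lemma~\ref{le:fracrepsize}) beats what Cauchy--Schwarz can extract. Your Cauchy--Schwarz route is in fact what the paper uses for the \emph{generalized} lattice case (Theorem~\ref{th:ConcurrentSmallSumSet}), where no better bound is available and the conclusions are correspondingly weaker.
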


Every half-lattice can be transformed to the form $[n^\alpha] \times B$ with a translation, scaling, and possibly switching the axes.
To prove Theorem \ref{th:HalfLatticeConcurrentOnLattice}, we require a basic property of multiplicative energy. 
Surprisingly, we could not find any results regarding such a property.
Posting this question online did not help either, so we ended up proving it on our own. 

\begin{theorem} \label{th:multEnergy}
Let $\alpha,\eps>0$. \\
(a) Consider $A \subset \RR$ such that $|A| = n$.
Then 
\[ E^{\times}(A, [n^\alpha]) =O(n^{1 + \alpha + \eps}). \]
(b) 
Consider $x\in \QQ$ and $A \subset \RR\setminus\{0\}$ such that $|A| = n^\alpha$.
Then
\[ E^{\times}(A, [n]+x) =O(n^{1+\alpha+\eps}+n^{2\alpha}). \]
(c) Consider $x\in \RR\setminus \QQ$ and $A \subset \RR\setminus\{0\}$ such that $|A| = n^\alpha$.
Then
\[ E^{\times}(A, [n]+x) =O(n^{1+\alpha}+n^{2\alpha}). \]
\end{theorem}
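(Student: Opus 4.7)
The plan is to treat the three parts separately, starting with the most direct, part~(c). For $x$ irrational, I would split the quadruples $(a_1,a_2,b_1,b_2)\in A^2\times([n]+x)^2$ satisfying $a_1b_1=a_2b_2$ according to whether $a_1=a_2$. The diagonal contribution ($a_1=a_2$, forcing $b_1=b_2$) is exactly $|A|\cdot n=n^{1+\alpha}$. For $a_1\ne a_2$, writing $b_i=x+j_i$ with $j_i\in[n]$, the identity $b_2/b_1=a_1/a_2$ becomes $(r-1)(x+j_1)=j_2-j_1$ with $r=a_1/a_2$. If $r\in\QQ$, the right-hand side is an integer while the left is a nonzero rational multiple of the irrational $x+j_1$, forcing $x\in\QQ$, a contradiction. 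If $r\notin\QQ$, any two valid $j_1,j_1'\in[n]$ would yield $r(j_1-j_1')\in\ZZ$ by subtraction, again forcing $r\in\QQ$; hence at most one $j_1$ works per pair, giving an off-diagonal contribution of at most $n^{2\alpha}$.

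For part~(b), with $x=p_0/q_0\in\QQ$, the scaling identity $E^\times(A,[n]+x)=E^\times(A,p_0+q_0[n])$ (from $a_1b_1=a_2b_2 \iff a_1(q_0b_1)=a_2(q_0b_2)$) reduces the problem to bounding multiplicative energy against an integer arithmetic progression $D$. The analysis then parallels part~(a) after accounting for the shift $p_0$, and the $n^{2\alpha}$ term handles regimes where $q_0$ is so large that the shifted progression becomes ``spread out'' and each ratio $a_1/a_2$ admits at most $O(1)$ valid pairs $(b_1,b_2)$, as in part~(c).

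Part~(a) is the main task. I would first reduce to $A\subset\ZZ_{>0}$: since $[n^\alpha]\subset\QQ_{>0}$, only pairs $(a_1,a_2)\in A^2$ with $a_1/a_2\in\QQ_{>0}$ contribute. Partitioning $A$ into equivalence classes under the relation ``$a\sim a'$ iff $a/a'\in\QQ_{>0}$'' and multiplicatively scaling each class (which leaves the energy invariant) reduces to $A\subset\ZZ_{>0}$. For such $A$ with $|A|=n$, the natural parametrization sets $u=\gcd(a_1,a_2)$ and $k=\gcd(b_1,b_2)$ and writes $a_1=pu$, $a_2=qu$, $b_1=kq$, $b_2=kp$ with $\gcd(p,q)=1$, yielding
\[
E^\times(A,[n^\alpha]) = \sum_{\substack{p,q\ge 1\\ \gcd(p,q)=1}} \lfloor n^\alpha/\max(p,q)\rfloor \cdot |\{u\in\ZZ_{>0}: pu,qu\in A\}|.
\]
The elementary inequality $\sum_{(c_1,c_2)\in S^2} 1/\max(c_1,c_2) \le 2|S|$ for finite $S\subset\ZZ_{>0}$ yields the first-pass bound $E^\times(A,[n^\alpha])\le 2n^\alpha\sum_{a\in A}d_{\le n^\alpha}(a)$, where $d_{\le M}(a)$ counts divisors of $a$ not exceeding $M$.

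The hard part will be tightening this to $O(n^{1+\alpha+\eps})$: for adversarial $A$ such as $A=N!\cdot[n]$ with $N=n^\alpha$, the divisor sum $\sum_a d_{\le n^\alpha}(a)$ can be as large as $nN$, giving only the weaker $O(n^{1+2\alpha})$. To close the gap I would first normalize $A$ by dividing out $\gcd(A)$ (an energy-preserving operation), and then refine the above via M\"obius inversion to enforce $\gcd(p,q)=1$ sharply rather than dropping the condition. Combined with the divisor bound $d(m)=O_\eps(m^\eps)$ applied after a dyadic decomposition of $A$ by multiplicative scale---using that only pairs with $a_1/a_2\in[n^{-\alpha},n^\alpha]$ contribute, so pairs from far-apart scales decouple---this should recover the target bound. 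Making the M\"obius cancellation behave uniformly for sets whose elements span many multiplicative scales is where I expect most of the technical effort to go.
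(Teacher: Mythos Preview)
Your treatment of part~(c) is correct and essentially matches the paper: both arguments show that for irrational $x$ each ratio $y\ne 1$ has at most one representation as $(p+x)/(q+x)$ with $p,q\in[n]$, so the off-diagonal contribution is at most $|A|^2=n^{2\alpha}$ and the diagonal gives $n\cdot|A|=n^{1+\alpha}$. Your reduction in part~(b) via scaling by the denominator $q_0$ is also the right move; the paper proceeds similarly, showing directly that the number of representations of $a_1/a_2=s/t$ (lowest terms) by $([n]+x)/([n]+x)$ is at most $1+\lfloor n/\max(s,t)\rfloor$, and then invoking part~(a). So once~(a) is in hand, your~(b) and~(c) are fine.

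Part~(a), however, has a genuine gap. Your first-pass bound $E^\times(A,[n^\alpha])\le 2n^\alpha\sum_{a\in A}d_{\le n^\alpha}(a)$ is correct but, as you observe, only yields $O(n^{1+2\alpha})$ for adversarial $A$. The proposed remedies do not close this. Normalizing $\gcd(A)=1$ handles your example $A=N!\cdot[n]$, but if one adjoins a single element coprime to the rest---say $A=\{1\}\cup N!\cdot[n-1]$---then $\gcd(A)=1$ while every $a\in N!\cdot[n-1]$ is divisible by each $j\le N=n^\alpha$, so $\sum_{a\in A}d_{\le n^\alpha}(a)=\Theta(n^{1+\alpha})$ and your bound stays at $n^{1+2\alpha}$. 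The M\"obius and dyadic-scale ideas are not developed enough to see how they would overcome this; you yourself flag the M\"obius step as where ``most of the technical effort'' lies, which is an honest acknowledgment that the argument is incomplete.

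The paper takes a genuinely different route for~(a). First, it reduces general $\alpha$ to $\alpha=1$ via the elementary inequality $n^{1-\alpha}\lfloor n^\alpha/m\rfloor\le \lfloor n/m\rfloor+1$, giving
\[
n^{1-\alpha}\,E^\times(A,[n^\alpha])\ \le\ E^\times(A,[n])+n^2.
\]
For $\alpha=1$ it writes $E^\times(A,[n])=\sum_{a_1,a_2}r_{[n]/[n]}(a_1/a_2)$ and applies Cauchy--Schwarz in $a_1$ to obtain
\[
E^\times(A,[n])\ \le\ n^{1/2}\Bigl(\sum_{a_2,a_3\in A} r^{(2)}_{[n]/[n]}\bigl(a_3/a_2\bigr)\Bigr)^{1/2},
\]
where $r^{(2)}_{[n]/[n]}(k)$ counts solutions to $xy/(zw)=k$ in $[n]^4$. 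A divisor-function argument gives $r^{(2)}_{[n]/[n]}(s/t)=O(n^{2+\eps}/\max(s,t))$ for $\gcd(s,t)=1$. A dyadic split on $\max(s,t)$ then either yields the bound directly (large $\max$) or rewrites the remaining sum as $O(n^{1+\eps})\cdot E^\times(A,[n])$ itself (small $\max$), and the resulting self-bounding inequality $E^\times(A,[n])=O\bigl(n^{1+\eps}\,E^\times(A,[n])^{1/2}\bigr)$ closes to $E^\times(A,[n])=O(n^{2+\eps})$. The Cauchy--Schwarz--to--$r^{(2)}$ step followed by this bootstrap is the key idea you are missing; it replaces your intractable divisor sum over elements of $A$ by a quantity that depends only on the \emph{ratios} $a_3/a_2$ and can be controlled uniformly.
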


Our final result requires a few more definitions.
A \emph{generalized arithmetic progression} of dimension $d$ is defined as
\begin{equation} \label{eq:GAPdef} 
\left\{ a + \sum_{j=1}^{d} k_j b_j\ :\ k_j\in \NN \text{ and } 0\le k_j \le n_j-1 \text{ for every } 1\le j \le d \right\}, 
\end{equation}
where $a,b_1,\ldots,b_d \in \RR$ are fixed.
The \emph{size} of this a generalized arithmetic progression is $n_1\cdot n_2 \cdots n_d$.
Note that an arithmetic progression is a generalized arithmetic progression of dimension one. 
The set of $y$-intercepts in Theorem \ref{th:Lattice} is the union of two generalized arithmetic progression of dimension two. 

The \emph{sum set} of a set $A\subset \RR$ is
\[ A+A = \{a+a'\ :\ a,a'\in A \}.\]
As explained in more detail in Section \ref{sec:MultEnergy}, a set that satisfies $|A+A|=\Theta(|A|)$ is a constant portion of a constant-dimension generalized arithmetic progressions.
In all known point--line configurations with $\Theta(n^{4/3})$ incidences, the point set is a Cartesian product of generalized arithmetic progressions of a constant dimension. 
Equivalently, the point set is $A\times B$ where $|A+A|=\Theta(|A|)$ and $|B+B|=\Theta(|B|)$. 

We say that a Cartesian product $A\times B$ is a \emph{generalized lattice} if both $A$ and $B$ are constant-dimension generalized arithmetic progressions. 
We say that a Cartesian product $A\times B$ is a \emph{generalized half-lattice} if one of $A$ and $B$ is a constant-dimension generalized arithmetic progression.
We now state our results for generalized lattices and generalized half-lattices.

\begin{theorem} \label{th:ConcurrentSmallSumSet}
For $1/3< \alpha < 1/2$, let $A,B\subset \RR$ satisfy $|A|=n^{\alpha}$ and $|B|=n^{1-\alpha}$. \\[2mm]
(a) If $A\times B$ is a generalized lattice, then every concurrent family is of size $O(n^{1/3}\log n)$. \\[2mm]
(b) If $|B+B|=O(|B|)$ then every concurrent family is of size $O(n^{1/3+\alpha/2}\log^{1/2} n)$. \\[2mm]
(c) If $|A+A|=O(|A|)$ then every concurrent family is of size $O(n^{5/6-\alpha/2}\log^{1/2} n)$.
\end{theorem}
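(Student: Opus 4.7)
The plan is to translate the concurrent-family problem into a lower bound on a multiplicative energy and then match it from above using the given sumset hypotheses. Suppose the concurrent family of $k$ proper lines passes through a point $(x_0,y_0)$; translate so this point becomes the origin and set $A' = A-x_0$, $B' = B-y_0$. Each line of the family then passes through the origin with a distinct slope $m_i$, and the $\Theta(n^{1/3})$ incidences on the $i$-th line are exactly the pairs $(a',b')\in A'\times B'$ with $b'/a'=m_i$. Summing the squared counts over the $k$ slopes yields
\[
E^\times(A',B') \;\geq\; \Omega\!\left(k\,n^{2/3}\right),
\]
so the problem reduces to upper-bounding $E^\times(A',B')$.

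For the upper bound, apply the Cauchy--Schwarz inequality in the ratio variable,
\[
E^\times(A',B') \;\leq\; \sqrt{E^\times(A') \cdot E^\times(B')},
\]
and then estimate each factor according to whichever structure is available. For an arbitrary set $X$ one has the trivial $E^\times(X)\leq |X|^3$. For a set $G$ contained in a constant-dimension generalized arithmetic progression, the goal is the polylogarithmic bound
\[
E^\times(G) \;=\; O\!\left(|G|^2 \log^{O(1)} |G|\right).
\]
For a one-dimensional arithmetic progression this is the classical divisor-sum bound, which also follows from Theorem~\ref{th:multEnergy}(a) with $A = [|G|]$. For higher-dimensional GAPs, slice the GAP along its largest-step direction into translated arithmetic progressions $L_1,\ldots,L_M$ of length $n_1\geq |G|^{1/d}$, apply Theorem~\ref{th:multEnergy}(b) or (c) (according to whether the shift of $L_i/b_1$ is rational or irrational) to each $E^\times(G,L_i)$, and control the cross-slice contributions via a Cauchy--Schwarz argument on the sum over $i$. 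Since a translate of a GAP remains a GAP of the same dimension and size, both $A'$ and $B'$ inherit the GAP-containment from $A$ or $B$ whenever the corresponding hypothesis holds.

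Combining these ingredients gives the three bounds. For part~(a), both $A'$ and $B'$ lie in constant-dim GAPs of sizes $\Theta(n^\alpha)$ and $\Theta(n^{1-\alpha})$, so the Cauchy--Schwarz product is bounded by $O(n\log^{O(1)} n)$, yielding $k = O(n^{1/3}\log n)$. For part~(b), Freiman's theorem applied to $B$ places $B'$ in a constant-dim GAP of size $O(|B|)$ while $A$ remains arbitrary, so
\[
E^\times(A',B') \;\leq\; \sqrt{n^{3\alpha}\cdot O(n^{2(1-\alpha)}\log^{O(1)} n)} \;=\; O\!\left(n^{1+\alpha/2}\log^{1/2} n\right),
\]
and thus $k = O(n^{1/3+\alpha/2}\log^{1/2} n)$. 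Part~(c) is symmetric with the roles of $A$ and $B$ swapped, giving $k = O(n^{5/6-\alpha/2}\log^{1/2} n)$.

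The hardest step will be the uniform polylogarithmic bound $E^\times(G) = O(|G|^2\log^{O(1)}|G|)$ for GAPs of any constant dimension. The one-dimensional case is standard, but the multi-dimensional case requires carefully combining Theorem~\ref{th:multEnergy} across the slices of the canonical GAP parameterization while controlling the Cauchy--Schwarz loss on cross-slice terms; in particular, one must track both rational and irrational shifts of the AP slices, invoking both parts~(b) and (c) of Theorem~\ref{th:multEnergy}. A secondary technicality is excluding the degenerate contribution of pairs with a zero coordinate after translation (i.e., when the concurrent center lies in $A\times\RR$ or $\RR\times B$), which can be handled without affecting the asymptotic bounds.
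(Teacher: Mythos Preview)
Your overall strategy matches the paper's exactly: translate the center to the origin, convert $k$ concurrent proper lines into the lower bound $E^\times(A',B')=\Omega(kn^{2/3})$, bound $E^\times(A',B')$ from above by $\sqrt{E^\times(A')\,E^\times(B')}$ via Cauchy--Schwarz, and read off the constraint on $k$. The divergence is entirely in how you obtain the upper bound $E^\times(X)=O(|X|^2\log|X|)$ when $X$ has small doubling.

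The paper does this in one line using Solymosi's inequality (Lemma~\ref{le:Solymosi}): $E^\times(X)=O(|X+X|^2\log|X|)$. Since a constant-dimension GAP satisfies $|X+X|=O(|X|)$, and since parts~(b) and~(c) already \emph{assume} $|B+B|=O(|B|)$ or $|A+A|=O(|A|)$, Solymosi gives $E^\times(X)=O(|X|^2\log|X|)$ with the exact logarithmic power claimed. No Freiman, no slicing, no Theorem~\ref{th:multEnergy}.

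Your proposed route---slicing a $d$-dimensional GAP into $M$ translated arithmetic progressions $L_i$ and applying Theorem~\ref{th:multEnergy} to each $E^\times(G,L_i)$---does not reach the target for $d\ge 2$. Writing $r_G(q)=\sum_i s_i(q)$ with $s_i(q)=|\{(g,g')\in G\times L_i:g/g'=q\}|$ and applying Cauchy--Schwarz gives $E^\times(G)\le M\sum_i E^\times(G,L_i)$. Even granting the best bound $E^\times(G,L_i)=O(n_1|G|^{1+\eps}+|G|^2)$ from Theorem~\ref{th:multEnergy}(b)/(c), summing over $M=|G|/n_1$ slices and paying the extra factor $M$ yields $O(|G|^3/n_1^{1-\eps}+|G|^4/n_1^2)$, which is at best $|G|^{3-1/d+\eps}$---far from $|G|^2$. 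An inductive version (peeling off one dimension at a time and using Cauchy--Schwarz with $E^\times(G,L_i)\le\sqrt{E^\times(G)E^\times(L_i)}$) suffers the same $M^2$ loss. Moreover, Theorem~\ref{th:multEnergy} itself only delivers $n^\eps$ rather than a genuine polylogarithm, so even in the one-dimensional case your route would give $O(n^{1/3+\eps})$ rather than the stated $O(n^{1/3}\log n)$. The missing ingredient is precisely Solymosi's lemma; once you invoke it, the slicing program and the detour through Freiman for parts~(b) and~(c) become unnecessary.
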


Theorem \ref{th:ConcurrentSmallSumSet} states that large concurrent families cannot occur in more main cases. 
For example, part (a) implies that the concurrent case of Theorem \ref{th:StructuralSzemTrot}(a) cannot occur with generalized lattices.
Part (b) of Theorem \ref{th:ConcurrentSmallSumSet} implies that the concurrent case cannot occur when $|B+B|=O(|B|)$ and $\alpha<4/9$.

In Section \ref{sec:MultEnergy}, we study additive and multiplicative energies and prove Theorem \ref{th:multEnergy}.
In Section \ref{sec:Lattices}, we study lattices and prove Theorem \ref{th:Lattice}.
Finally, in Section \ref{sec:generalizations} we study our lattice generalizations and prove Theorem \ref{th:HalfLatticeConcurrentOnLattice} and Theorem \ref{th:ConcurrentSmallSumSet}.

\section{Energies} \label{sec:MultEnergy}

In this section, we define additive and multiplicative energies, and then prove Theorem \ref{th:multEnergy}.
The \emph{additive energy} of a finite set $A\subset \RR$ is 
\[ E^+(A) = |\{(a,b,c,d)\in A^4\ :\ a+b=c+d\}|. \]
Additive energy is a central object in additive combinatorics. 
In some sense, it measures how $A$ behaves under addition. 
It is not difficult to show that $|A|^2<E^+(A)\le |A|^3$.
Intuitively, a large additive energy implies that there exists a large subset $A'\subset A$ that satisfies $|A'+A'|=O(|A'|)$.
For a more rigorous statement and many additional details about additive energy, see for example \cite{TaoVu06}.

Recall that generalized arithmetic progressions were defined in \eqref{eq:GAPdef}.
The following is a variant of Frieman's theorem over the reals.

\begin{theorem} \label{th:Freiman}
Let $A\subset \RR$ be a finite set with $|A+A| \le k |A|$ for some constant $k$.
Then $A$ is contained in a generalized arithmetic progression of size at most $c\cdot |A|$ and dimension at most $d$. 
Both $c$ and $d$ depend on $k$ but not on $|A|$.
\end{theorem}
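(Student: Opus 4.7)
The plan is to deduce the real-valued statement from the classical Freiman theorem in a torsion-free abelian group, via a Freiman $2$-isomorphism. Since $A$ is finite, it is contained in the $\QQ$-vector subspace $V\subset\RR$ that it spans, which has some dimension $m\le |A|$.

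First I would fix a $\QQ$-basis $b_1,\ldots,b_m$ of $V$ and write each $a\in A$ uniquely as $a=\sum_{j=1}^m q_j(a)\,b_j$ with $q_j(a)\in\QQ$. Clearing denominators with a common $N\in\NN$, the map
\[ \phi:A\to\ZZ^m,\qquad \phi(a)=(Nq_1(a),\ldots,Nq_m(a)),\]
is the restriction to $A$ of the injective $\QQ$-linear map $V\to\QQ^m$ sending $\sum_j x_j b_j$ to $(Nx_1,\ldots,Nx_m)$. As such, it is a Freiman $2$-isomorphism: for any $a_1,a_2,a_3,a_4\in A$, one has $a_1+a_2=a_3+a_4$ in $\RR$ if and only if $\phi(a_1)+\phi(a_2)=\phi(a_3)+\phi(a_4)$ in $\ZZ^m$. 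In particular $|\phi(A)+\phi(A)|=|A+A|\le k|A|$.

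Second, I would invoke the standard Freiman theorem for torsion-free abelian groups (see e.g.\ Tao--Vu, \emph{Additive Combinatorics}), which asserts that any finite subset $B$ of such a group with $|B+B|\le k|B|$ lies inside a generalized arithmetic progression of size at most $c(k)|B|$ and dimension at most $d(k)$. Applied to $\phi(A)\subset\ZZ^m$, this yields a GAP $P\subset\ZZ^m$ with base point $p_0$ and integer step vectors $v_1,\ldots,v_d$, containing $\phi(A)$, with $|P|\le c|A|$ and $d\le d(k)$.

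Finally, I would transport $P$ back to $\RR$ via the $\QQ$-linear map $\psi:\ZZ^m\to\RR$, $\psi(k_1,\ldots,k_m)=\tfrac{1}{N}\sum_j k_j b_j$, which is injective because the $b_j$ are $\QQ$-linearly independent. The image $\psi(P)$ is a generalized arithmetic progression in $\RR$ with base point $\psi(p_0)$ and step vectors $\psi(v_1),\ldots,\psi(v_d)$, of the same cardinality and dimension as $P$, and $A\subset\psi(P)$ since $\psi\circ\phi$ is the identity on $A$. The only nontrivial ingredient is the integer/abelian-group version of Freiman's theorem, which I would cite as a black box; constructing $\phi$ and checking that $\psi$ carries GAPs to GAPs of the same size and dimension are routine bookkeeping, and are where I expect no real obstacle.
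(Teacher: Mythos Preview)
Your reduction is correct and is the standard way to derive the real version of Freiman's theorem from the torsion-free abelian group version: embed $A$ into $\ZZ^m$ via a $\QQ$-linear map (which preserves all additive relations, hence is a Freiman isomorphism of every order), apply the black-box theorem there, and pull the resulting GAP back. The only minor point worth stating explicitly is that the ``size'' of a GAP in the paper is defined as the product $n_1\cdots n_d$ rather than the cardinality of the underlying set, so you do not even need injectivity of $\psi$ on $P$ to conclude that $\psi(P)$ has the same size and dimension as $P$; this is automatic from the definition.

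However, there is nothing to compare against: the paper does not prove Theorem~\ref{th:Freiman}. It is stated as a known ``variant of Freiman's theorem over the reals'' and used as a black box, with Tao--Vu \cite{TaoVu06} cited for background. So your proposal goes beyond what the paper does by actually supplying the (routine) reduction, whereas the paper simply quotes the result.
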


Intuitively, Theorem \ref{th:Freiman} implies that a large $E^+(A)$ implies that a large subset of $A$ is a constant portion of a constant-dimension generalized arithmetic progression.
Alternatively, after removing some noise from it, $A$ is a constant portion of a constant-dimension generalized arithmetic progression. 

A \emph{generalized geometric progression} of dimension $d$ is defined as
\begin{equation*} 
\left\{ a \cdot \prod_{j=1}^{d} b_j^{k_j}\ :\ k_j\in \NN \text{ and } 0\le k_j \le n_j-1 \text{ for every } 1\le j \le d \right\}, 
\end{equation*}
where $a,b_1,\ldots,b_d \in \RR$ are fixed.
The \emph{multiplicative energy} of a finite set $A\subset \RR$ is 
\[ E^\times(A) = |\{(a,b,c,d)\in A^4\ :\ a\cdot b=c\cdot d\}|. \]
Similarly to the above, a large $E^\times(A)$ implies that a large subset of $A$ is a constant portion of a constant-dimension generalized geometric progression.

The following result is by Solymosi \cite{Solymosi09}.

\begin{lemma} \label{le:Solymosi}
Every finite $A\subset \RR$ satisfies $E^\times(A)=O(|A+A|^2 \log |A|)$.
\end{lemma}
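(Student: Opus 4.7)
The plan is to follow Solymosi's classical slicing-by-ratios argument. First, I would decompose the multiplicative energy by the common ratio: for each nonzero $r \in \RR$, let
\[ R_r = \{(x,y) \in A \times A : y = r x\}, \]
which is a subset of the line $\ell_r$ through the origin of slope $r$. A direct bijection (map each 4-tuple $(a,b,c,d) \in A^4$ with $ab=cd$ and $abcd \ne 0$ to the pair $((d,a),(b,c)) \in R_r \times R_r$ with $r = a/d = c/b$) shows that
\[ E^\times(A) = \sum_{r \ne 0} |R_r|^2 + O(|A|^2), \]
where the $O(|A|^2)$ absorbs any 4-tuples involving a zero coordinate. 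By partitioning $A$ according to the signs of its elements, I may assume $A \subset (0,\infty)$, so that all relevant ratios are positive and all points of each $R_r$ lie in the open first quadrant.

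Order the nonzero ratios arising as $r_1 < r_2 < \cdots < r_t$ and consider, for each consecutive pair, the Minkowski sum $R_{r_i} + R_{r_{i+1}} \subset (A+A) \times (A+A)$. Since $\ell_{r_i}$ and $\ell_{r_{i+1}}$ are distinct lines through the origin, the addition map $R_{r_i} \times R_{r_{i+1}} \to R_{r_i} + R_{r_{i+1}}$ is injective (if two sums coincide, then a vector along $\ell_{r_i}$ equals a vector along $\ell_{r_{i+1}}$, forcing both to be zero), so $|R_{r_i} + R_{r_{i+1}}| = |R_{r_i}| \cdot |R_{r_{i+1}}|$. The geometric crux is that for $p = (x_1, r_i x_1) \in R_{r_i}$ and $q = (x_2, r_{i+1} x_2) \in R_{r_{i+1}}$ with $x_1,x_2>0$, the sum $p+q$ has slope $(r_i x_1 + r_{i+1} x_2)/(x_1+x_2)$, a strict convex combination of $r_i$ and $r_{i+1}$. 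Hence $R_{r_i}+R_{r_{i+1}}$ lies in the open angular sector between $\ell_{r_i}$ and $\ell_{r_{i+1}}$, and these sectors are pairwise disjoint as $i$ varies. This yields
\[ \sum_{i=1}^{t-1} |R_{r_i}| \cdot |R_{r_{i+1}}| \le |A+A|^2. \]

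Finally, I would dyadically decompose the ratios by the size of $|R_r|$: for each integer $k$ with $0 \le k \le \log_2|A|$, let $T_k = \{r : 2^k \le |R_r| < 2^{k+1}\}$. Applying the previous inequality to the consecutive elements within $T_k$ (whose corresponding angular sectors are still disjoint, since a sub-ordering of a sorted list yields sub-intervals) gives $(|T_k|-1) \cdot 4^k \le |A+A|^2$, hence $|T_k| \le |A+A|^2/4^k + 1$. Summing,
\[ E^\times(A) \le \sum_{k=0}^{\lfloor \log_2|A| \rfloor} 4^{k+1}|T_k| \le 4 \sum_k \bigl(|A+A|^2 + 4^k\bigr) = O\bigl(|A+A|^2 \log |A|\bigr) + O(|A|^2), \]
and since $|A+A| \ge |A|$ the second term is absorbed into the first, yielding the claimed bound. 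The main subtle point is the geometric disjoint-sector claim, which requires the coordinates to lie in a common open half-plane so that sums remain in the cone spanned by the two lines; handling general $A \subset \RR$ proceeds by splitting $A$ into its positive and negative parts and treating each sign class separately, with the contribution of the element $0$ bounded trivially by $O(|A|^2)$.
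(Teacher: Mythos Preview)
The paper does not supply its own proof of this lemma; it is stated with attribution to Solymosi \cite{Solymosi09} and used as a black box. Your proposal is correct and is precisely Solymosi's original argument: the ratio decomposition $E^\times(A)=\sum_r |R_r|^2$, the disjoint-sector Minkowski-sum bound $\sum_i |R_{r_i}|\,|R_{r_{i+1}}|\le |A+A|^2$, and the dyadic pigeonholing on $|R_r|$ to extract the $\log|A|$ loss. The sign reduction you flag at the end is indeed routine (split $A$ into $A_+$ and $A_-$, bound the mixed energy by Cauchy--Schwarz as $E^\times(A_+,A_-)\le E^\times(A_+)^{1/2}E^\times(A_-)^{1/2}$, and use $|A_\pm+A_\pm|\le|A+A|$), so nothing is missing.
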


As a first step towards proving Theorem \ref{th:multEnergy}(a), we show that it suffices to consider sets of nonzero integers. 
For $A\subset \RR$ and $x \in \RR$, we set $xA = \{x\cdot a\ :\ a \in A \}$. 

\begin{lemma} \label{le:RealsToInts}
Consider a finite $A \subset \RR\setminus\{0\}$ and $n \in \NN$. 
Then there exists $B \subset \ZZ\setminus\{0\}$ such that $|B| = |A|$ and
\[ E^{\times}(A, [n]) \leq E^{\times}(B, [n]). \]
\end{lemma}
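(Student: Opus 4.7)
The plan is to exploit the fact that only rational ratios between elements of $A$ can contribute to $E^{\times}(A,[n])$. Indeed, if $(a_1,a_2,k_1,k_2)\in A^2\times[n]^2$ satisfies $a_1k_1=a_2k_2$, then $a_1/a_2=k_2/k_1\in\QQ$. I would therefore define an equivalence relation on $A$ by $a\sim a'$ iff $a/a'\in\QQ$, and let $A_1,\ldots,A_m$ be the resulting classes. Since contributing quadruples force $a_1\sim a_2$, the energy decomposes as
\[ E^{\times}(A,[n]) \;=\; \sum_{i=1}^{m} E^{\times}(A_i,[n]). \]

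Next, I would rescale each class to a set of nonzero integers without altering its energy with $[n]$. Fix a representative $a_i^{\ast}\in A_i$; then each $a\in A_i$ equals $(p/q)a_i^{\ast}$ for some integers $p,q$, and clearing these finitely many denominators by their least common multiple $Q_i$ produces an integer scaling factor $\lambda_i=Q_i/a_i^{\ast}$ for which $\lambda_i A_i\subset\ZZ\setminus\{0\}$. Because the condition $a_1k_1=a_2k_2$ is invariant under rescaling of $a_1,a_2$ by a common nonzero constant, $E^{\times}(\lambda_i A_i,[n])=E^{\times}(A_i,[n])$.

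Finally, I would merge the integer sets into a single disjoint union of the same total cardinality. Choose a prime $p$ exceeding every element of every $\lambda_i A_i$ in absolute value, so that $p$ divides no element of any $\lambda_i A_i$, and set $B_i=p^{\,i}\lambda_i A_i$. Then every nonzero element of $B_i$ has $p$-adic valuation exactly $i$, so $B_1,\ldots,B_m$ are pairwise disjoint subsets of $\ZZ\setminus\{0\}$. Let $B=\bigcup_i B_i$; clearly $|B|=\sum_i|A_i|=|A|$, and
\[ E^{\times}(B,[n]) \;\ge\; \sum_{i=1}^{m} E^{\times}(B_i,[n]) \;=\; \sum_{i=1}^{m} E^{\times}(A_i,[n]) \;=\; E^{\times}(A,[n]), \]
where the inequality keeps only the ``diagonal'' $B_i\times B_i$ contribution among all $B_i\times B_j$ pairs used to enumerate $E^{\times}(B,[n])$.

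No step is genuinely hard here; the only point requiring a small trick is guaranteeing disjointness of the rescaled integer classes without disturbing multiplicative energy, which the prime-power factor $p^i$ delivers cleanly because $p$ is coprime to every element of $\lambda_i A_i$.
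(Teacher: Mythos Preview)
Your proof is correct and follows essentially the same approach as the paper: partition $A$ by the equivalence relation $a\sim a'\iff a/a'\in\QQ$, use scaling invariance to turn each class into integers, then recombine the classes with a spacing trick to force disjointness. The only cosmetic difference is that the paper spaces the classes by powers of $m=\max_{a\in A}|a|$ and clears denominators at the end, whereas your use of $p$-adic valuation with a large prime $p$ gives a cleaner disjointness argument.
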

\begin{proof}
We set $m = \max_{a\in A} |a|$. 
We note that multiplying all elements of $A$ by the same nonzero number does not affect $E^{\times}(A, [n])$.
Indeed, this does not change whether $a,b,c,d\in A\times[n]\times A\times[n]$ satisfy $a\cdot b=c\cdot d$.
Thus, by multiplying all elements of $A$ by a sufficiently large number, we may assume that $m\ge 10$.

For $a,b \in \RR$, we write $a \sim b$ if there exist $s,t \in \NN$ such that $\frac{s}{t} \cdot a = b$. 
We claim that, if a quadruple $(a_1,a_2,b_1,b_2)\in A^2 \times [n]^2$ contributes to $E^{\times}(A,[n])$, then $a_1\sim a_2$.
Indeed, by definition such a quadruple satisfies that $a_1\cdot b_1 = a_2 \cdot b_2$ and $b_1,b_2\in \NN$.

It is not difficult to verify that $\sim$ forms an equivalence relation. 
We denote the equivalence classes of the elements of $A$ under $\sim$ as $A_1,\ldots,A_k$.
In particular, these $k$ sets are disjoint and $A = \bigcup_{j=1}^k A_j$.
For each $j\in [k]$, we arbitrarily fix $a_j\in A_j$.
Combining the above leads to 
\[ E^{\times}(A, [n]) = \sum_{j=1}^k E^{\times}(A_j, [n]) = \sum_{j=1}^k E^{\times}\left(\frac{1}{a_j}A_j, [n]\right) \le E^{\times}\left(\bigcup_{j=1}^k \left(\frac{m^{10(j-1)}}{a_j}A_j\right), [n]\right). \]

We set $C = \bigcup_{j=1}^k \left(\frac{m^{10(j-1)}}{a_j}A_j\right)$, so the above states that $E^{\times}(A, [n]) \leq E^{\times}(C, [n])$. We note that $|A|=|C|$ and $C\subset \QQ\setminus\{0\}$.
Let $d$ be the common denominator of all elements of $C$.
We set $B = dC$, to obtain that $E^{\times}(A, [n]) \leq E^{\times}(B, [n])$, that $|A|=|B|$, and that $B\subset \ZZ\setminus\{0\}$.
\end{proof}

For $m \in \NN$ and $k\in \QQ$, we set 
\begin{align} 
r_{[m]/[m]}(k) &= \left|\left\{(x,y) \in [m]^2\ :\ \frac{x}{y} = k \right\}\right| \nonumber \\[2mm]
r_{[m]/[m]}^{(2)}\left(k\right) &=  \left|\left\{(x,y,z,w) \in [m]^4\ :\ \frac{x \cdot y}{z \cdot w} = k \right\}\right|. \label{eq:rm4def}
\end{align}

The following lemma states that $r_{[m]/[m]}^{(2)}$ cannot be large.

\begin{lemma} \label{le:fracrepsize}
Let $s,t,k\in \NN$ satisfy $\GCD(s,t)=1$ and $\max(s, t) =k$.
Then, for every $\eps>0$, we have that
\[  r_{[m]/[m]}^{(2)}\left(\frac{s}{t}\right) = O\left(\frac{m^{2 + \eps}}{k}\right). \]
\end{lemma}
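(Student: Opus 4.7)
The plan is to collapse the defining equation to a single integer parameter and then apply the standard divisor bound. First I would rewrite $xy/(zw) = s/t$ as $t\cdot xy = s\cdot zw$. Since $\gcd(s,t)=1$, this forces $s \mid xy$ and $t \mid zw$, so there is a unique positive integer $N$ with $xy = sN$ and $zw = tN$. Every quadruple counted by $r_{[m]/[m]}^{(2)}(s/t)$ is thus obtained from such an $N$ together with an ordered factorization $xy = sN$ and an ordered factorization $zw = tN$.

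Next I would bound the range of $N$. From $x,y \le m$ we get $sN = xy \le m^2$, and similarly $tN \le m^2$, hence $N \le m^2/\max(s,t) = m^2/k$. For each such $N$, the number of ordered pairs $(x,y) \in \ZZ_{>0}^2$ with $xy = sN$ equals the divisor count $d(sN)$, and restricting to $[m]^2$ can only decrease this count; the analogous bound $d(tN)$ applies to $(z,w)$. Combining these observations yields
\[ r_{[m]/[m]}^{(2)}(s/t) \;\le\; \sum_{N=1}^{\lfloor m^2/k \rfloor} d(sN)\, d(tN). \]

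The last step is to invoke the classical divisor bound $d(M) = O_\eps(M^\eps)$. Since $sN, tN \le m^2$, both $d(sN)$ and $d(tN)$ are $O(m^{\eps/2})$, so each summand is $O(m^\eps)$ after a harmless renaming of $\eps$, and summing over the at most $m^2/k$ values of $N$ produces the desired $O(m^{2+\eps}/k)$. I do not anticipate a substantive obstacle: the only real idea is to use $\gcd(s,t)=1$ to reduce the two-variable Diophantine condition to a single integer parameter $N$, after which the divisor bound handles the rest uniformly.
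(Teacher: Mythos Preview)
Your proof is correct and follows essentially the same approach as the paper: use $\gcd(s,t)=1$ to extract a single integer parameter bounded by $m^2/k$, then apply the divisor bound $d(M)=O(M^{\eps})$. Your parameterization by $N=xy/s=zw/t$ is in fact a bit more direct than the paper's, which splits $x=s_1p$, $y=s_2q$, $z=t_1p'$, $w=t_2q'$ with $s_1s_2=s$, $t_1t_2=t$, $pq=p'q'$ and then counts choices for $(s_1,t_1,pq,p,p')$; your $N$ is exactly their $pq$, and you avoid the extra factorization of $s$ and $t$.
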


In the bound of Lemma \ref{le:fracrepsize}, the parameter $\eps$ is fixed while $m$ is asymptotically large.

\begin{proof}[Proof of Lemma \ref{le:fracrepsize}.]
Without loss of generality, we assume that $s \le t$. 
For every $x,y,z,w\in [m]$ that satisfy $\frac{x\cdot y}{z\cdot w} = \frac{s}{t}$, there exist $s_1,s_2,p,q,t_1,t_2,p',q'\in [m]$ such that 
\begin{align*}
x &= s_1 \cdot p, \qquad y= s_2 \cdot q, \qquad s_1 \cdot s_2 = s,& \qquad & \qquad \\[2mm]
z &= t_1 \cdot p', \qquad w= t_2 \cdot q', \qquad t_1 \cdot t_2 = t,& \hspace{-10mm} \text{ and } \quad p\cdot q = p'\cdot q'.&  
\end{align*}

There exists $c\in \RR$ that satisfies the following for every $n\in \NN$:
The number of divisors of $n$ is at most $n^{c/\log\log n}$ (see for example \cite[Section 1.6]{Tao09}). 
Since $s\le x\cdot y$ and $x,y\le m$, we have that $s\le m^2$. 
The number of possible values for $s_1$ is at most the number of divisors of $s$, which is at most $m^{c/\log\log m} = O(m^{\eps/4})$.
After choosing $s_1$, the value of $s_2$ is fixed. 
Symmetrically, there are $O(m^{\eps/4})$ possible values for $t_1$ and then $t_2$ is fixed. 

We have that 
\[ p\cdot q = p'\cdot q' = z\cdot w/t = O(m^2/k).\]
Thus, there are $O(m^2/k)$ possible values for $p\cdot q$. 
For each value $p\cdot q$, the above bound on the number of divisors implies that there are $O(m^{\eps/4})$ possible values for $p$ and $O(m^{\eps/4})$ possible values for $p'$.
Combining the above leads to a total of $O(m^{2+\eps}/k)$ possible values for $s_1,s_2,p,q,t_1,t_2,p',q'$.
This in turn implies $O(m^{2+\eps}/k)$ possible values for $x,y,z,w$.
\end{proof}

For every positive rational number $k$, there exist unique $s,t \in \mathbb{N}$ that satisfy $\GCD(s,t)=1$ and $k=s/t$.
We denote these numbers as $s(k)$ and $t(k)$.
For every integer $1\le a\le n/\max(s(k),t(k))$, setting $x=a\cdot s(k)$ and $y=a\cdot t(k)$ gives that $x/y=k$. 
Thus,
\begin{equation} \label{eq:rnnEquivDef} 
r_{[m]/[m]}(k) = \floor[\Big]{\frac{m}{\max(s(k),t(k))}}.
\end{equation}

Consider a finite $A\subset \RR\setminus \{0\}$ and $m\in \NN$.
Let $(a_1,a_2,b_1,b_2)\in A^2\times [m]^2$ be a quadruple that contributes to $E^{\times}(A,[m])$.
Since $a_1 \cdot b_1 = a_2\cdot b_2$ is equivalent to $a_1/a_2 = b_1/b_2$, we have that
\begin{equation}\label{eq:EnergyAltForm}
    E^{\times}(A, [m]) = \sum_{(a,b) \in A^2} r_{[m]/[m]}\left(\frac{a}{b}\right).
\end{equation}

We now prove Theorem \ref{th:multEnergy} in the special case where $\alpha=1$.

\begin{lemma} \label{le:MultiplicativeN2}
Consider $A \subset \RR$ such that $|A| = n$, and let $\eps > 0$.
Then 
\[ E^{\times}(A, [n]) =O(n^{2 + \eps}). \]
\end{lemma}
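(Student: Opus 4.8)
The plan is to combine Lemma~\ref{le:RealsToInts} with the closed form \eqref{eq:rnnEquivDef}--\eqref{eq:EnergyAltForm} and a divisor‑counting argument.

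First I would dispose of degeneracies: quadruples with $a_1=0$ force $a_2=0$ and contribute at most $n^2$, and after splitting $A\setminus\{0\}$ into its positive and negative parts and applying Lemma~\ref{le:RealsToInts}, it suffices to bound $E^{\times}(B,[n])$ for a set $B\subset\ZZ$ of positive integers with $|B|\le n$. By \eqref{eq:EnergyAltForm} and \eqref{eq:rnnEquivDef},
\[ E^{\times}(B,[n]) \;=\; \sum_{(b_1,b_2)\in B^2}\left\lfloor \frac{n}{\max(s,t)}\right\rfloor, \]
where $b_1/b_2=s/t$ is written in lowest terms. Setting $g=\gcd(b_1,b_2)$ and grouping the pairs according to the value $k=\max(b_1,b_2)/g$ of the larger part of the reduced ratio, the right‑hand side becomes $\sum_{k=1}^{n}\lfloor n/k\rfloor\,W_k$, where $W_k$ is the number of ordered pairs $(b_1,b_2)\in B^2$ whose reduced ratio has larger part exactly $k$. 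Since $\sum_{k\le n}\lfloor n/k\rfloor W_k \le n\sum_{k\le n}W_k/k$, the lemma follows once I show $W_k=O(n^{1+\eps})$ uniformly in $k$: then $\sum_{k\le n}W_k/k=O(n^{1+\eps}\log n)$, and rescaling $\eps$ finishes the estimate.

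To bound $W_k$ (for $k\ge 2$; the term $k=1$ is just the diagonal, of size $n$) observe that, up to the symmetry $b_1\leftrightarrow b_2$, $W_k$ counts pairs with $b_1=k\gcd(b_1,b_2)$ and $b_2<b_1$. Fixing the larger element $b_1$ — necessarily a multiple of $k$ — the admissible partners are the elements of $B$ among $b_1/k,2b_1/k,\dots,(k-1)b_1/k$, all lying in the short multiplicative window $[b_1/k,b_1)$; in particular there are at most $k-1$ of them. Dually, fixing the smaller element $b_2$, the number $b_1/k$ must be a divisor of $b_2$ smaller than $k$, so $b_2$ has at most $\min(k-1,d(b_2))$ admissible partners, $d(\cdot)$ being the divisor function. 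The point is that these two bounds cannot both be large at once: if the multiples of $k$ in $B$ are multiplicatively spread out, each window of ratio $k$ contains only $O(1)$ of them and $W_k=O(n)$, while if they cluster one partitions the relevant range into $O(\log_k n)$ overlapping multiplicative windows of ratio $k^2$ and combines the trivial bound $|B|\le n$ inside each window with the divisor estimate. This is exactly the kind of count Lemma~\ref{le:fracrepsize} is built to feed, and I would use it — or the bound $d(m)=m^{o(1)}$ behind it — to finish the clustered case.

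I expect this last step to be the main obstacle. The integers supplied by Lemma~\ref{le:RealsToInts} may be enormous, so $d(m)=m^{o(1)}$ cannot be applied to elements of $B$ directly; the argument must exploit that only divisors up to $n$ are ever relevant and that the coprimality built into the reduced ratio $s/t$ excludes the pathological configurations. (For example, a dilate of $\{1,\dots,n\}$ — which looks extremal for the divisor count, since each element then has every divisor up to $n$ — in fact has $W_k=\Theta(\phi(k)\,n/k)$, far below the crude bound $\Theta(kn)$.) Carrying out the bookkeeping in the clustered case uniformly in $k$ and independently of the magnitudes of the elements of $B$ is where the genuine difficulty lies.
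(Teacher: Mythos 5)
Your reductions (removing $0$, fixing signs, invoking Lemma~\ref{le:RealsToInts}, rewriting the energy via \eqref{eq:EnergyAltForm} and \eqref{eq:rnnEquivDef}, and regrouping as $\sum_{k}\lfloor n/k\rfloor W_k \le n\sum_k W_k/k$) are sound and match the paper's setup. The problem is that the claim you then need, $W_k=O(n^{1+\eps})$ uniformly in $k$, carries the entire content of the lemma, and the ``spread versus clustered'' dichotomy you sketch does not prove it. The windowing observation only says that the two members of a counted pair lie within a multiplicative factor $k$ of each other; inside such a window the elements of $B$ are still arbitrary (possibly astronomically large) integers, and what you must control in the clustered case is the number of divisors of $b\in B$ below $k$. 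For general $b$ this can be as large as $k$ itself (take $b$ a multiple of $\mathrm{lcm}(1,\dots,k)$), so neither $d(m)=m^{o(1)}$ nor Lemma~\ref{le:fracrepsize} (a statement about quadruples in $[m]^4$, not about elements of $B$) can be fed into your count, and the window decomposition returns only the trivial $W_k=O(nk)$. This is not a bookkeeping issue: without the restriction $k\le n$ the claim is false. Take distinct primes $p_1,\dots,p_m$ all within a factor $1+1/m^2$ of some huge $P$, let $B=\{\prod_{i\in S}p_i:S\subseteq[m]\}$, so $n=2^m$, and let $k=p_1\cdots p_{m/10}$; every pair $(b_{S_1},b_{S_2})$ with $S_1\setminus S_2=[m/10]$ and $|S_2\setminus S_1|\le m/10-1$ has reduced ratio with larger part exactly $k$, and there are at least $\binom{0.9m}{0.1m-1}2^{0.8m}\ge n^{1.2}$ such pairs. (No contradiction with the lemma, since here $k>n$ and $\lfloor n/k\rfloor=0$.) So a correct proof must genuinely exploit $k\le n$, which your sketch never does.

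The paper circumvents exactly this obstruction by applying Cauchy--Schwarz in the repeated $A$-variable, replacing $\sum_{(a,b)}r_{[n]/[n]}(a/b)$ by $n^{1/2}\bigl(\sum_{(a_2,a_3)}r^{(2)}_{[n]/[n]}(a_3/a_2)\bigr)^{1/2}$: the divisor counting is thereby transferred from the unbounded elements of $B$ to quadruples in $[n]^4$, where every integer that must be factored is at most $n^2$ and the divisor bound is legitimate (this is Lemma~\ref{le:fracrepsize}). A second ingredient your plan lacks is the bootstrap: in the regime where small values of $\max(s,t)$ dominate, the resulting bound is $O(n^{1+\eps/2}E^{\times}(A,[n])^{1/2})$, which is compared back against $E^{\times}(A,[n])$ itself via \eqref{eq:rnnEquivDef} rather than estimated termwise. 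I recommend restructuring the argument around these two steps instead of trying to bound $W_k$ pointwise.
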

\begin{proof}
If $0\in A$ then the number of quadruples of $A^2\times [n]^2$ that contribute to $E^{\times}(A, [n])$ and include 0 is $n^2$.
We may thus remove $0\in A$ without affecting the statement of the lemma. 
Then, by Lemma \ref{le:RealsToInts} we may assume that $A\subset \ZZ\setminus\{0\}$.

In each quadruple that contributes to $E^{\times}(A, [n])$, either both elements of $A$ are positive or both are negative.
If most of $E^{\times}(A, [n])$ comes from quadruples with negative elements of $A$, then we multiply all elements of $A$ by $-1$.
This does not change the value of $E^{\times}(A, [n])$.
Either way, we then remove from $A$ all of the negative elements.
This decreases $E^{\times}(A, [n])$ by at most half, which does not affect the asymptotic size of $E^{\times}(A, [n])$.
We now have that $A\subset \NN$ and that $|A|\le n$.

By \eqref{eq:EnergyAltForm}, the Cauchy--Schwartz inequality, and \eqref{eq:rm4def}, we obtain that
\begin{align}
   E^{\times}(A, [n]) &= \sum_{a_1 \in A}\sum_{a_2 \in A} r_{[n]/[n]}\left(\frac{a_1}{a_2}\right) \nonumber \\
   & \le \left(\sum_{a_1 \in A} 1\right)^{1/2}\cdot\left(\sum_{a_2 \in A}\left(\sum_{a_3 \in A} r_{[n]/[n]}\left(\frac{a_2}{a_3}\right)\right)^2\right)^{1/2} \nonumber \\
   &= n^{1/2}\cdot \left(\sum_{a_1 \in A} \sum_{(a_2, a_3) \in A^2} r_{[n]/[n]}\left(\frac{a_1}{a_2}\right) r_{[n]/[n]}\left(\frac{a_1}{a_3}\right)\right)^{1/2} \nonumber \\
   &= n^{1/2}\cdot \left(\sum_{(a_2, a_3) \in A^2} \sum_{a_1 \in A} r_{[n]/[n]}\left(\frac{a_1}{a_2}\right) r_{[n]/[n]}\left(\frac{a_3}{a_1}\right)\right)^{1/2} \nonumber \\
   & \le n^{1/2}\cdot\left( \sum_{(a_2, a_3) \in A^2}  r_{[n]/[n]}^{(2)}\left(\frac{a_3}{a_2}\right)\right)^{1/2}. \label{eq:IntermediateUpperE}
\end{align}
For the final transition above, recall that $r_{[n]/[n]}^{(2)}\left(\frac{a_3}{a_2}\right)$ is the number of solutions to $\frac{x\cdot y}{z\cdot w} = a_3/a_2$ with $x,y,z,w\in [n]$.
In the transition, we consider a subset of these solutions, where $x/z = a_1/a_2$ and $y/w = a_3/a_1$.

To have that $r_{[n]/[n]}^{(2)}(a_3/a_2)> 0$, we must have that $\max(s(a_3/a_2), t(a_3/a_2)) \le n^2$.
We may thus rewrite \eqref{eq:IntermediateUpperE} as
\[ E^{\times}(A, [n]) \le n^{1/2}\cdot \left( \sum_{j=0}^{2\log n} \sum_{(a_2, a_3) \in A^2 \atop 2^j\le \max(s(\frac{a_3}{a_2}), t(\frac{a_3}{a_2})) < 2^{j+1}} r_{[n]/[n]}^{(2)}\left(\frac{a_3}{a_2}\right)\right)^{1/2}. \]
Lemma \ref{le:fracrepsize} implies that 
\begin{equation} \label{eq:Alljs} 
E^{\times}(A, [n]) \le n^{1/2}\cdot \left( \sum_{j=0}^{2\log n} \sum_{(a_2, a_3) \in A^2 \atop 2^j\le \max(s(\frac{a_3}{a_2}), t(\frac{a_3}{a_2})) < 2^{j+1}} O\left(\frac{n^{2+\eps}}{2^j}\right)\right)^{1/2}. 
\end{equation}

\parag{Studying \eqref{eq:Alljs}.}
We consider the case where
\begin{equation} \label{eq:largej}
\sum_{j=0}^{\log n} \sum_{(a_2, a_3) \in A^2 \atop 2^j \le \max(s(\frac{a_3}{a_2}), t(\frac{a_3}{a_2})) < 2^{j+1}} O\left(\frac{n^{2+\eps}}{2^j}\right) \le \sum_{j=\log n}^{2\log n} \sum_{(a_2, a_3) \in A^2 \atop 2^j\le \max(s(\frac{a_3}{a_2}), t(\frac{a_3}{a_2})) < 2^{j+1}} O\left(\frac{n^{2+\eps}}{2^j}\right).
\end{equation}
That is, the case where large values of $j$ dominate the bound of \eqref{eq:Alljs}.
In this case, 
\begin{align*}
E^{\times}(A, [n]) &= n^{1/2}\cdot O\left( \sum_{j=\log n}^{2\log n} \sum_{(a_2, a_3) \in A^2 \atop 2^j\le \max(s(\frac{a_3}{a_2}), t(\frac{a_3}{a_2})) < 2^{j+1}} \frac{n^{2+\eps}}{2^j}\right)^{1/2} \\
&= n^{1/2}\cdot O\left( \sum_{j=\log n}^{2\log n} \sum_{(a_2, a_3) \in A^2 \atop 2^j\le \max(s(\frac{a_3}{a_2}), t(\frac{a_3}{a_2})) < 2^{j+1}} n^{1+\eps}\right)^{1/2} \\
&= n^{1/2}\cdot O\left(\sum_{(a_2, a_3) \in A^2} n^{1+\eps}\right)^{1/2} =n^{1/2}\cdot O\left(n^{3+\eps}\right)^{1/2} = O(n^{2+\eps}).
\end{align*}

It remains to consider the case where \eqref{eq:largej} is false. 
Applying \eqref{eq:EnergyAltForm} and then \eqref{eq:rnnEquivDef} leads to
\begin{align}
E^\times(A, [n]) &= \sum_{(a,b) \in A^2} r_{[n]/[n]}\left(\frac{a}{b}\right) =  \sum_{(a,b) \in A^2} \floor[\Big]{\frac{n}{\max(s(\frac{a}{b}), t(\frac{a}{b}))}} \nonumber \\[2mm]
            &=\Theta\left(\sum_{(a,b) \in A^2} \frac{n}{\max(s(\frac{a}{b}), t(\frac{a}{b}))}\right) =\Theta\left(\sum_{0\le j \le \log n} \sum_{(a,b) \in A^2 \atop 2^j\le \max(s(\frac{a}{b}),t(\frac{a}{b}))  < 2^{j+1} } \frac{n}{2^j} \right). \label{eq:SmalljApprox}
\end{align}
In the above, removing the floor function increases the amount by less than $n^2$.
This is allowed since, by definition, $E^\times(A, [n])\ge 2n^2-n$.

By \eqref{eq:Alljs}, recalling that we are in the case where \eqref{eq:largej} is false, and \eqref{eq:SmalljApprox}, we obtain that 
\begin{align*}
E^{\times}(A, [n]) &= n^{1/2}\cdot O\left( \sum_{j=0}^{\log n} \sum_{(a_2, a_3) \in A^2 \atop 2^j\le \max(s(\frac{a_3}{a_2}), t(\frac{a_3}{a_2})) < 2^{j+1}} \frac{n^{2+\eps}}{2^j}\right)^{1/2} \\[2mm]
&= n^{1+\eps/2}\cdot O\left( \sum_{j=0}^{\log n} \sum_{(a_2, a_3) \in A^2 \atop 2^j\le \max(s(\frac{a_3}{a_2}), t(\frac{a_3}{a_2})) < 2^{j+1}} \frac{n}{2^j}\right)^{1/2} = O\left( n^{1+\eps/2}\cdot E^\times(A,[n])^{1/2}\right). 
\end{align*}
Rearranging the above leads to $E^{\times}(A, [n])^{1/2}= O\left( n^{1+\eps/2}\right)$.
Squaring both sides completes the proof of this case.
\end{proof}

We are now ready to prove Theorem \ref{th:multEnergy}.
We first recall the statement of this result. 
\vspace{2mm}

\noindent {\bf Theorem \ref{th:multEnergy}.}
\emph{Let $\alpha,\eps>0$. \\
(a) Consider $A \subset \RR$ such that $|A| = n$.
Then 
\[ E^{\times}(A, [n^\alpha]) =O(n^{1 + \alpha + \eps}). \]
(b) 
Consider $x\in \QQ$ and $A \subset \RR\setminus\{0\}$ such that $|A| = n^\alpha$.
Then
\[ E^{\times}(A, [n]+x) =O(n^{1+\alpha+\eps}+n^{2\alpha}). \]
(c) Consider $x\in \RR\setminus \QQ$ and $A \subset \RR\setminus\{0\}$ such that $|A| = n^\alpha$.
Then
\[ E^{\times}(A, [n]+x) =O(n^{1+\alpha}+n^{2\alpha}). \] }
\begin{proof}
(a) By applying \eqref{eq:EnergyAltForm} and then \eqref{eq:rnnEquivDef}, we obtain that 
\begin{align*}
     n^{1-\alpha} \cdot E^\times&(A, [n^\alpha]) =  n^{1-\alpha}\cdot \sum_{(a,b) \in A^2} r_{[n^\alpha]/[n^\alpha]}\left(\frac{a}{b}\right) =
     \sum_{(a,b) \in A^2} n^{1-\alpha} \cdot \floor[\Big]{\frac{n^{\alpha}}{\text{max}(s(\frac{a}{b}), t(\frac{a}{b}))}}\\[2mm]
     & \leq \sum_{(a,b) \in A^2} \frac{n}{\text{max}(s(\frac{a}{b}), t(\frac{a}{b}))} \leq \sum_{(a,b) \in A^2} \floor[\Big]{\frac{n}{\text{max}(s(\frac{a}{b}), t(\frac{a}{b}))}} + |A|^2= E^\times(A,[n])+n^2.
\end{align*}
Rearranging and then applying Lemma \ref{le:MultiplicativeN2} leads to
\[ E^\times(A, [n^\alpha]) \le \frac{E^\times(A,[n])+n^2}{n^{1-\alpha}} = O(n^{1+\alpha+\eps}). \]

(b) Let $p,q\in \ZZ\setminus \{0\}$ satisfy $\gcd(p,q)=1$ and $x=p/q$.
We define 
\[ r_{p,q,n}(x) = \left|\left\{(b_1,b_2)\in [n]^2\ :\ x=(b_1+p/q)/(b_2+p/q) \right\}\right|. \]
Then, $E^\times([n]+p/q,A) = \sum_{a_1,a_2\in A}r_{p,q,n}(a_1/a_2)$.

We note that $r_{p,q,n}(x)>0$ only when $x$ is rational. 
Consider $s,t\in \NN$ and $b_1,b_2\in [n]$, so that $\gcd(s,t)=1$ and
\begin{equation} \label{eq:RationalRep}
\frac{s}{t} = \frac{b_1+p/q}{b_2+p/q}, \quad \text{ or equivalently } \ \frac{s}{t} = \frac{b_1q+p}{b_2q+p}. 
\end{equation}

We first assume that $s,t\neq 1$. 
By the above, there exists $k\in \NN$ such that $b_1q+p = ks$ and $b_2q+p=kt$.
This implies that $p\equiv ks \mod q$. 
Since $\gcd(p,q)=1$, we get that $\gcd(s,q)=1$, which in turn implies that $q$ has a multiplicative inverse modulo $s$. 
Then, $a_1q+p = ks$ leads to $a_1\equiv -pq^{-1} \mod s$.
In other words, there exists an integer $d$ such that $a_1 = (-pq^{-1}\mod s)+ds$.
Since $a_1\in [n]$, we conclude that the number of possible values for $a_1$ is at most $1+\lfloor n/s \rfloor$.
A symmetric argument shows that there are at most $1+\lfloor n/t \rfloor$ possible values for $a_2$.

By \eqref{eq:RationalRep}, fixing the value of one of $b_1,b_2$ determines the value of the other. 
Thus, the number of possible values for $(b_1,b_2)$ is at most $1+\lfloor n/\max\{s,t\} \rfloor$. 
Combining the above with \eqref{eq:rnnEquivDef}, \eqref{eq:EnergyAltForm}, and part (a) of the current theorem, leads to
\begin{align*}
E^\times([n]&+p/q,A) = \sum_{a_1,a_2\in A}r_{p,q,n}(a_1/a_2) = \sum_{a_1,a_2\in A \atop a_1/a_2 \in \QQ}r_{p,q,n}(a_1/a_2) \\ 
&\le \sum_{a_1,a_2\in A \atop a_1/a_2 \in \QQ}\left(1+\left\lfloor \frac{n}{\max\{s(a_1/a_2),t(a_1/a_2)\}} \right\rfloor\right)\\
&\le |A|^2 + \sum_{a_1,a_2\in A \atop a_1/a_2 \in \QQ}\left\lfloor \frac{n}{\max\{s(a_1/a_2),t(a_1/a_2)\}} \right\rfloor = |A|^2 + E(A,[n]) =O(n^{2\alpha}+n^{1+\alpha+\eps}).
\end{align*}

The above analysis ignores the cases where $s=1$ or $t=1$. 
We first consider the case where $s=1$ and $t>1$.
In this case, the above analysis still implies that the number of possible values for $(b_1,b_2)$ is at most $1+\lfloor n/t \rfloor$.
Thus, we still have that $r_{p,q,n}(a_1/a_2) \le 1+\left\lfloor \frac{n}{\max\{s(a_1/a_2),t(a_1/a_2)\}} \right\rfloor$, as required for the above analysis.
The case where $s>1$ and $t=1$ is symmetric.
Finally, when $s=t=1$, we immediately have that $r_{p,q,n}(a_1/a_2) \le 1+\left\lfloor \frac{n}{\max\{s(a_1/a_2),t(a_1/a_2)\}} \right\rfloor = n$, as required.
\vspace{2mm}

(c) We define
\[ r_{x,n}(y) = \left|\left\{p,q\in [n]\ :\ y=(p+x)/(q+x) \right\}\right|. \]
A quadruple $(a_1,q+x,a_2,p+x)\in (A\times ([n]+x))^2$ contributes to $E^{\times}(A, [n]+x)$ if and only if $a_1(p+x)=a_2(q+x)$.
Thus, for fixed $a_1,a_2\in A$, the number of quadruples with first element $a_1$ and third element $a_2$ is $r_{x,n}(a_2/a_1)$.
This implies that 
\[ E^{\times}(A, [n]+x) = \sum_{a_1,a_2\in A} r_{x,n}(a_2/a_1).\]

Consider $y\in \RR$ such that $r_{x,n}(y)\ge 2$.
Then there exist $p,p',q,q'\in [n]$ such that $(p,q)\neq(p',q')$ and 
\[ \frac{p+x}{q+x} = y = \frac{p'+x}{q'+x}. \]
Rearranging leads to 
\[ (p+x)(q'+x) = (p'+x)(q+x), \quad \text{or equivalently} \quad x(p+q') + pq' = x(p'+q)+p'q. \]

Since $x$ is irrational, we obtain that $p+q'=p'+q$ and $pq'=p'q$.
Rearranging the former leads to $p-p'=q-q'$, which is nonzero since $(p,q)\neq (p',q')$.
From $pq'=p'q$, we obtain that $pq'-pq =p'q-pq$, or equivalently $p(q'-q)=q(p'-p)$.
Since $p-p'=q-q'\neq 0$, we have that $p=q$.
This in turn implies that $y=(p+x)/(p+x)=1$.
We conclude that every $y\neq 1$ satisfies $r_{x,n}(y)\le 1$. 

We define the indicator function
\[ \mathbbm{1}_x(y) = \begin{cases} 
1, \qquad \text{exist } p,q\in [n]\ \text{such that } y=(p+x)/(q+x)\  \text{ and } p\neq q, \\
0, \qquad \text{otherwise.} 
\end{cases}\]
By definition, we have that $r_{x,n}(1)=n$.
Combining the above leads to
\begin{align*}
E^\times(A,[n]+x) = \sum_{a_1,a_2\in A} r_{x,n}(a_2/a_1) = |A|\cdot n + \sum_{a_1,a_2\in A} \mathbbm{1}_x(a_2/a_1) < |A|\cdot n+|A|^2.
\end{align*}
\end{proof}

\section{Lattices} \label{sec:Lattices}

In this section, we carefully characterize the configurations with $\Theta(n^{4/3})$ incidences where the point set is a lattice.
That is we prove Theorem \ref{th:Lattice}. 
Our proof technique heavily relies on \emph{Euler's totient function}.
The totient function of $j\in \NN$ is
\[ \phi(j) = |\{ i\in [j]\ :\ \gcd(i,j)=1 \}|. \]

We also consider the following variant of the totient function.
For $m,n\in\NN$, we define 
\[ \phi_m (n) = |\{ a \in [m]\ :\ \gcd(a, n) = 1\}|. \]
For proofs and further discussion of the following results, see \cite[Section 3]{BST22}.
For $n\in \NN$, let $w(n)$ denote the number of \emph{prime} divisors of $n$.

\begin{lemma} \label{eq:TotientProps}
Let $m,n\in \NN$. \\[2mm]
(a) $\displaystyle \sum_{j=1}^n \phi(j) = \Theta(n^2)$. \\[2mm]
(b) $\displaystyle \sum_{j=1}^{n}\frac{\phi(j)}{j} = \Theta(n)$. \\[2mm]
(c) $\displaystyle \phi_{m\cdot n}(n) = m\cdot \phi(n)$. \\[2mm]
(d) $\displaystyle \phi_m(n) = \frac{m}{n}\cdot \phi(n) + O(2^{w(n)})$. \\[2mm]
(e) $\displaystyle \sum_{r=1}^n 2^{w(r)} = O(n\log\log n)$.
\end{lemma}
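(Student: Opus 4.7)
The plan is to derive these five facts as standard consequences of Möbius inversion, inclusion--exclusion, and the elementary observation that the condition $\gcd(a,n)=1$ depends only on $a \bmod n$. Part (c) is the cleanest to dispose of: the interval $[mn]$ partitions into $m$ consecutive blocks of length $n$, each of which forms a complete residue system modulo $n$, so each contributes exactly $\phi(n)$ coprime integers, giving the stated total of $m\,\phi(n)$.

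For part (d), I would begin from the inclusion--exclusion identity
\[ \phi_m(n) = \sum_{d \mid n} \mu(d)\,\lfloor m/d \rfloor, \]
obtained by sieving out residues divisible by each prime factor of $n$. Writing $\lfloor m/d \rfloor = m/d + O(1)$ and using $\sum_{d \mid n} \mu(d)/d = \phi(n)/n$ produces the main term $(m/n)\,\phi(n)$, while the error is a sum of $O(1)$ contributions over the squarefree divisors of $n$, of which there are exactly $2^{w(n)}$.

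For parts (a) and (b), I would apply the Möbius identities $\phi(j) = \sum_{d\mid j} \mu(d)\,(j/d)$ and $\phi(j)/j = \sum_{d \mid j} \mu(d)/d$, swap the order of summation in each sum, and evaluate. The inner sums reduce to $\sum_{k \le \lfloor n/d \rfloor} k$ and $\lfloor n/d \rfloor$ respectively, and the leading behaviour is governed by $\sum_d \mu(d)/d^2 = 1/\zeta(2)$, yielding $\sum_{j \le n} \phi(j) = \Theta(n^2)$ and $\sum_{j \le n} \phi(j)/j = \Theta(n)$ after estimating the tails. A tidy way to handle both bounds simultaneously is to truncate the Möbius sum at $d = \sqrt{n}$ and bound the remainder trivially.

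For part (e), the approach is to use $2^{w(r)} = \sum_{d \mid r,\; d \text{ squarefree}} 1$ and swap summations:
\[ \sum_{r=1}^n 2^{w(r)} = \sum_{d \le n,\; d \text{ squarefree}} \lfloor n/d \rfloor, \]
after which the bound follows from a standard estimate on the partial sum of reciprocals of squarefree integers. The main obstacle throughout is purely bookkeeping: tracking error terms cleanly in (a), (b), and (d), and confirming the logarithmic factor in (e). No genuinely new ideas are required --- each part is a textbook application of Möbius inversion, which is why the paper defers to \cite{BST22} for the full proofs.
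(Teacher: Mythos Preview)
The paper does not prove this lemma; it defers entirely to \cite{BST22}, as you yourself observe in your final sentence. So there is no proof in the paper to compare against, and your sketches for parts (a)--(d) are the standard arguments one would expect: the periodicity argument for (c), M\"obius inclusion--exclusion for (d), and the swap-and-sum treatment for (a) and (b) are all correct.

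There is, however, a genuine issue with part (e). Your identity $\sum_{r\le n} 2^{w(r)} = \sum_{d\le n,\ d\text{ squarefree}} \lfloor n/d\rfloor$ is correct, but the ``standard estimate on the partial sum of reciprocals of squarefree integers'' gives $\sum_{d\le n,\ \mu^2(d)=1} 1/d = \Theta(\log n)$, not $\Theta(\log\log n)$. Indeed, $2^{w(\cdot)}$ has Dirichlet series $\zeta(s)^2/\zeta(2s)$, whose double pole at $s=1$ forces $\sum_{r\le n} 2^{w(r)} = \Theta(n\log n)$. So your method is sound but yields $O(n\log n)$, and the bound $O(n\log\log n)$ stated in the lemma is in fact false as written. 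Fortunately this does not damage the paper's applications: wherever (e) is invoked in the proof of Theorem~\ref{th:Lattice}, the term $\sum_{s\le kn^{2/3-\alpha}} 2^{w(s)}$ is being compared against $n^{1/3}$, and since $\alpha>1/3$ the weaker bound $O(n^{2/3-\alpha}\log n)$ is still $o(n^{1/3})$.
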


We now recall the statement of Theorem \ref{th:Lattice}.
\vspace{2mm}

\noindent {\bf Theorem \ref{th:Lattice}.}
\emph{For a fixed $1/3<\alpha<1/2$, let $\lines$ be a set of $n$ lines such that $I([n^\alpha]\times [n^{1-\alpha}],\lines) = \Theta(n^{4/3})$.
Then 
\begin{itemize}
	\item Any concurrent family of proper lines in $\lines$ is of size $o(n^{1/3})$.
    \item There exist $\Theta(n^{1/3})$ parallel families of $\Theta(n^{2/3})$ proper lines. 
    \item The slopes of the $\Theta(n^{1/3})$ parallel families are a constant portion of 
    \begin{align*}
    &\left\{\pm s/t\ :\ s,t\in \NN,\ \gcd(s,t)=1,\ t=\Theta(n^{\alpha-1/3}),\ s\le t\cdot n^{1-2\alpha}\right\}\bigcup\\[2mm]
	&\hspace{20mm}\left\{\pm s/t\ :\ s,t\in \NN,\ \gcd(s,t)=1,\ s=\Theta(n^{2/3-\alpha}),\ t\le s/ n^{1-2\alpha}\right\}.    
    \end{align*}
    \item Assume that a proper line contains at least $n^{1/3}/k$ points. Then the $y$-intercepts of a parallel family with slope $s/t$ form a constant portion of the set
    \[ \left\{j-i\cdot \frac{s}{t}\ :\  i\in [t],\ j\in [n^{1-\alpha}-sn^{1/3}/k],\ \text{ or }\ i\in [n^\alpha-tn^{1/3}/k],\ j\in [s]\right\}.\]    
\end{itemize} }

\begin{proof}
Every axis-parallel line is incident to either $n^{\alpha},n^{1-\alpha}$, or zero points of $[n^\alpha]\times [n^{1-\alpha}]$.
Thus, there are no proper axis-parallel lines.
In our analysis we focus on lines with positive slopes. 
The analysis of the lines with negative slopes is symmetric. 
We fix $k\in \NN$, such that a proper line is incident to more than $n^{1/3}/k$ points and to at most $kn^{1/3}$.

\begin{figure}[ht]
\centerline{\includegraphics[width=0.23\textwidth]{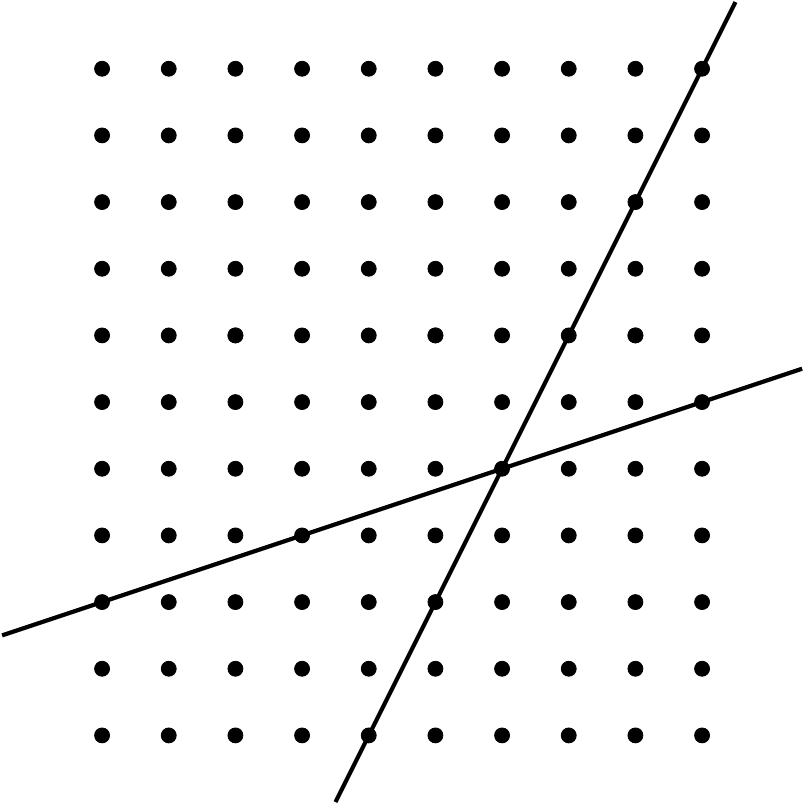}}
\caption{For a lattice of size $10\times 11$, a line is steep if its slope is larger than $11/10$. 
When $s=1$ and $t=3$, a line contains at most $\lceil 10/3\rceil=4$ points.
When $s=2$ and $t=1$, a line contains at most $\lceil 11/2\rceil=6$ points. }
\label{fi:slopeInc}
\end{figure}

\parag{Slopes with at least one proper line.}
The slope of a line $\ell$ that is incident to two points $(a,b),(a',b')\in [n^\alpha]\times [n^{1-\alpha}]$ is $(b-b')/(a-a')$.
This implies that all proper lines have rational slopes. 
Since we focus on positive slopes, there exist unique $s,t\in \NN$ such that the slope of $\ell$ is $s/t$ and $\GCD(s,t)=1$.
We note that $\ell$ contains a point of the lattice $\ZZ^2$ every $t$ columns and every $s$ rows.
This implies that the number of points of $[n^\alpha]\times [n^{1-\alpha}]$ that are on $\ell$ is at most $\min(\lceil n^\alpha/t\rceil,\lceil n^{1-\alpha}/s\rceil)$.
We say that $\ell$ is \emph{steep} if $s/t > n^{1-2\alpha}$.
If $\ell$ is steep then it contains at most $\lceil n^\alpha/t \rceil$ points of $[n^\alpha]\times [n^{1-\alpha}]$.
Otherwise, $\ell$ contains at most $\lceil n^{1-\alpha}/s\rceil$ points of $[n^\alpha]\times [n^{1-\alpha}]$.
See Figure \ref{fi:slopeInc}.

We first consider slopes of proper non-steep lines. 
By the preceding paragraph, a non-steep line with slope $s/t$ (and $\gcd(s,t)=1$) is incident to at most $\lceil n^\alpha/t\rceil$ points of $[n^\alpha]\times [n^{1-\alpha}]$.
We must thus have that $n^\alpha/t > n^{1/3}/k$, or equivalently $t<k\cdot n^{\alpha-1/3}$. 
Since we are in the case where $s\le t\cdot n^{1-2\alpha}$, after fixing $t$ the number of valid values for $s$ is $\phi_{t\cdot n^{1-2\alpha}}(t)$.
By Lemma \ref{eq:TotientProps}(c) and Lemma \ref{eq:TotientProps}(a), the number of non-steep slopes with at least one proper line is at most 
\[ \sum_{t=1}^{k\cdot n^{\alpha-1/3}} \phi_{t\cdot n^{1-2\alpha}}(t) = n^{1-2\alpha}\cdot \sum_{t=1}^{k\cdot n^{\alpha-1/3}} \phi(t) = n^{1-2\alpha}\cdot O(n^{2\alpha-2/3}) = O(n^{1/3}). \]

We next consider slopes of proper steep lines. 
By the above, a steep line with slope $s/t$ is incident to at most $n^{1-\alpha}/s$ points of $[n^\alpha]\times [n^{1-\alpha}]$.
We thus have that $\lceil n^{1-\alpha}/s\rceil > n^{1/3}/k$, or equivalently $s<k\cdot n^{2/3-\alpha}$. 
Since we are in the case where $s> t\cdot n^{1-2\alpha}$, after fixing $s$ the number of valid values for $t$ is $\phi_{s/ n^{1-2\alpha}}(s)$.
By Lemma \ref{eq:TotientProps}(d), Lemma \ref{eq:TotientProps}(e), and Lemma \ref{eq:TotientProps}(a), the number of steep slopes with at least one proper line is at most 
\begin{align*} 
\sum_{s=1}^{k\cdot n^{2/3-\alpha}} \phi_{s/ n^{1-2\alpha}}(s) &= \frac{1}{n^{1-2\alpha}}\sum_{s=1}^{k\cdot n^{2/3-\alpha}}\phi(s) + \sum_{s=1}^{k\cdot n^{2/3-\alpha}}O(2^{w(s)}) \\[2mm] 
&= O\left(\frac{n^{4/3-2\alpha}}{n^{1-2\alpha}}+n^{2/3-\alpha}\log \log n \right) = O(n^{1/3}). 
\end{align*}

Since the analysis of the negative slopes is symmetric, the number of slopes that have at least one proper line is $O(n^{1/3})$.
This implies that any concurrent family of proper lines is of size $O(n^{1/3})$.
That is, in the context of Theorem \ref{th:StructuralSzemTrot}, we are in the case of many families of parallel lines.

Our next goal is to study the proper lines of a fixed slope $s/t$.
In particular, we characterize the slopes that have many proper lines. 

\parag{Non-steep slopes with many proper lines.}
We fix $s,t\in \NN$ with $\gcd(s,t)=1$ and $s/t\le n^{1-2\alpha}$.
Recall that a line with slope $s/t$ is incident to at most $\lceil n^\alpha/t \rceil$ points of $[n^\alpha]\times [n^{1-\alpha}]$.
We also recall that, in this case $t\le k\cdot n^{\alpha-1/3}$.

\begin{figure}[ht]
    \centering
    \begin{subfigure}[b]{0.2\textwidth}
    \centering
        \includegraphics[width=0.97\textwidth]{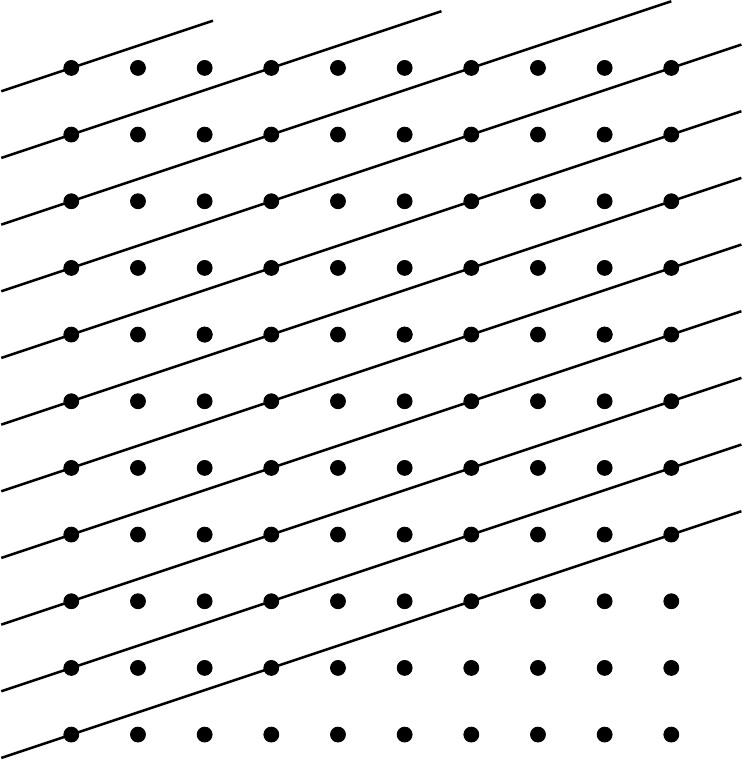}
        \caption{}
    \end{subfigure}
    \hspace{1cm}
    \begin{subfigure}[b]{0.18\textwidth}
        \centering
        \includegraphics[width=\textwidth]{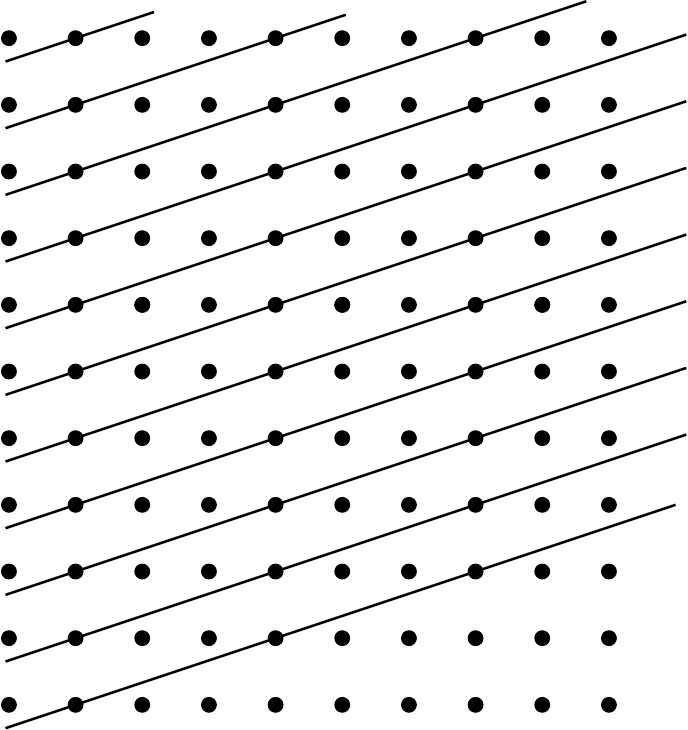}
        \caption{}
    \end{subfigure}
    \vspace{2mm}
    \caption{(a) The lines of slope $1/3$ that contain a point from the leftmost column of the lattice. (b) When moving from leftmost column to one of the next $t-1$ leftmost columns, some lines may be incident to one less point.}
    \label{fi:Leftmost}
\end{figure}

Let $C$ be the leftmost column of $[n^\alpha]\times [n^{1-\alpha}]$ and consider all lines with slope $s/t$ that are incident to a point of $C$.
See Figure \ref{fi:Leftmost}(a).
The lines that contain the $s$ top points of $C$ are incident to one point of $[n^\alpha]\times [n^{1-\alpha}]$.
The lines that contain the next $s$ highest points of $C$ are incident to two points of $[n^\alpha]\times [n^{1-\alpha}]$.
The lines that contain the next $s$ highest points are incident to three points, and so on. 
After $s$ lines that are incident to $\lceil n^\alpha/t\rceil-1$ points, the remaining $n^{1-\alpha}-s(\lceil n^\alpha/t\rceil-1)$ lines are incident to $\lceil n^\alpha/t\rceil$ points. 

A line contains a point of $\ZZ^2$ every $t$ columns. 
We may thus repeat the analysis of the preceding paragraph for each of the $t$ leftmost columns of $[n^\alpha]\times [n^{1-\alpha}]$ without considering the same line twice. 
The only difference is that some lines may be incident to one point less than their corresponding line from the analysis of $C$. 
See Figure \ref{fi:Leftmost}(b).

\begin{figure}[ht]
\centerline{\includegraphics[width=0.23\textwidth]{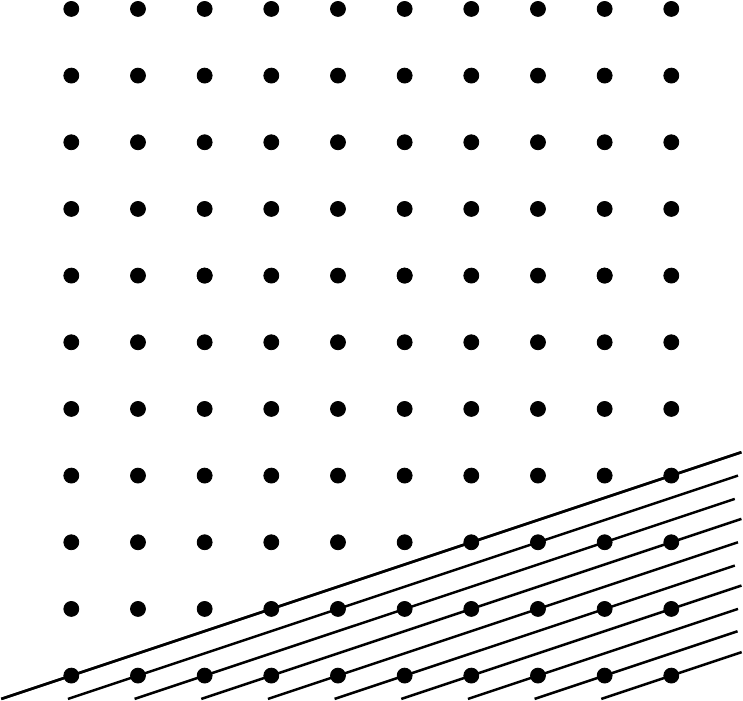}}
\caption{The lines with slope $1/3$ that contain a point from the bottom row of the lattice. }
\label{fi:Lowest}
\end{figure}

Let $R$ be the lowest row of $[n^\alpha]\times [n^{1-\alpha}]$ and consider all lines with slope $s/t$ that and are incident to a point of $R$.
See Figure \ref{fi:Lowest}.
The lines that contain the $t$ leftmost points of $R$ are incident to one point of $[n^\alpha]\times [n^{1-\alpha}]$.
The lines that contain the next $t$ leftmost points of $R$ are incident to two points of $[n^\alpha]\times [n^{1-\alpha}]$.
The lines that contain the next $t$ leftmost points are incident to three points, and so on. 
After $s$ lines that are incident to $\lceil n^\alpha/t\rceil-1$ points, at most $s$ lines are incident to $\lceil n^\alpha/t\rceil$ points. 
A line contains a point of $\ZZ^2$ every $s$ rows. 
We may thus repeat the above analysis for the $s$ bottom rows of $[n^\alpha]\times [n^{1-\alpha}]$ without considering the same line twice.
As before, some lines may be incident to one point less than their corresponding line from the analysis of $R$.

Every line with slope $s/t$ that is incident to at least one point of $[n^\alpha]\times [n^{1-\alpha}]$ contains a point from the bottom $s$ rows or from the $t$ leftmost columns.
Thus, it suffices to consider these rows and columns. 
We note that $s\cdot t$ lines are counted twice, once for the $s$ bottom rows and once for the $t$ leftmost columns. 

If $t =o(n^{\alpha-1/3})$ then $n^\alpha/t = \omega(n^{1/3})$. 
In this case, each of the $t$ leftmost columns contains $s(kn^{1/3}-n^{1/3}/k)$ points on proper lines with slope $s/t$. 
Similarly, each of the $s$ bottom rows contains $t(kn^{1/3}-n^{1/3}/k)$ points on such lines. 
In total, there are $\Theta(s\cdot t\cdot n^{1/3})$ proper lines with slope $s/t$. 
Since $t =o(n^{\alpha-1/3})$ and $s \le t\cdot n^{1-2\alpha}$, the number of such lines is
\[ \Theta(s\cdot t \cdot n^{1/3}) = O(t^2\cdot n^{1-2\alpha}\cdot n^{1/3}) = o(n^{2/3}). \]

By the above, when $t =o(n^{\alpha-1/3})$, every parallel family is of size $o(n^{2/3})$.
Since there are $O(n^{1/3})$ slopes with proper lines, such values of $t$ lead to $o(n^{4/3})$ incidences. 

We now consider the case where $t=\Omega(n^{\alpha-1/3})$.
Since $t=O(n^{\alpha-1/3})$, this is the case where $t=\Theta(n^{\alpha-1/3})$.
In this case, the lines that contain $\lceil n^\alpha/t\rceil$ points of $[n^\alpha]\times [n^{1-\alpha}]$ are proper.
Each of the $s$ bottom rows contains $n^\alpha-tn^{1/3}/k$ points that are incident to a proper line. 
Each of the leftmost $t$ columns contains $n^{1-\alpha}-sn^{1/3}/k$ points that are incident to such a line. 
By taking $k$ to be sufficiently large, since $t=\Theta(n^{\alpha-1/3})$, and since $s\le t\cdot n^{1-2\alpha}$, the number of proper lines with slope $s/t$ is  
\begin{align*} 
t\cdot(n^{1-\alpha}-sn^{1/3}/k) + s(n^\alpha-tn^{1/3}/k) = \Theta(tn^{1-\alpha} + sn^\alpha) = \Theta(n^{2/3}).
\end{align*}

We say that a slope is \emph{rich} if there are $\Theta(n^{2/3})$ proper lines with this slope. 
By the above, all slopes of the current case are rich. 
Since $t=\Theta(n^{\alpha-1/3})$, we may set $k_t\in \NN$ such that $n^{\alpha-1/3}/ k_t\le t\le n^{\alpha-1/3} \cdot k_t$.
After fixing a value for $t$, the number of valid values for $s$ is $\phi_{t\cdot n^{1-2\alpha}}(t)$.
By Lemma \ref{eq:TotientProps}(c) and Lemma \ref{eq:TotientProps}(a), the number of non-steep rich slopes is 
\begin{align*} 
\sum_{t=n^{\alpha-1/3}/k_t}^{n^{\alpha-1/3}\cdot k_t} \phi_{t\cdot n^{1-2\alpha}}(t) = \sum_{t=n^{\alpha-1/3}/k_t}^{n^{\alpha-1/3}\cdot k_t} &n^{1-2\alpha}\cdot \phi(t) = n^{1-2\alpha}\cdot \sum_{t=1}^{n^{\alpha-1/3}\cdot k_t} \phi(t) - n^{1-2\alpha}\cdot\sum_{t=1}^{n^{\alpha-1/3}/k_t} \phi(t)\\[2mm]
&=\Theta\left(n^{1-2\alpha}\left(n^{2\alpha-2/3}\cdot k_t^2 - n^{2\alpha-2/3}/ k_t^2\right)\right) = \Theta(n^{1/3}).
\end{align*}

We conclude that there are $\Theta(n^{1/3})$ families of $\Theta(n^{2/3})$ proper parallel lines with non-steep slopes. 
These lines lead to $\Theta(n^{4/3})$ incidences.

\parag{Steep slopes with many proper lines.}
This case can be studied in the same way as the case of non-steep slopes. 
Instead of repeating the same analysis, we focus on the changes required for this case.
We fix $s,t\in \NN$ such $\gcd(s,t)=1$ and $s/t> n^{1-2\alpha}$.
We recall that a line with slope $s/t$ is incident to at most $\lceil n^{1-\alpha}/s \rceil$ points of $[n^\alpha]\times [n^{1-\alpha}]$.
We also recall that a proper line with steep slope $s/t$ satisfies that $s\le k\cdot n^{2/3-\alpha}$.

\begin{figure}[ht]
\centerline{\includegraphics[width=0.2\textwidth]{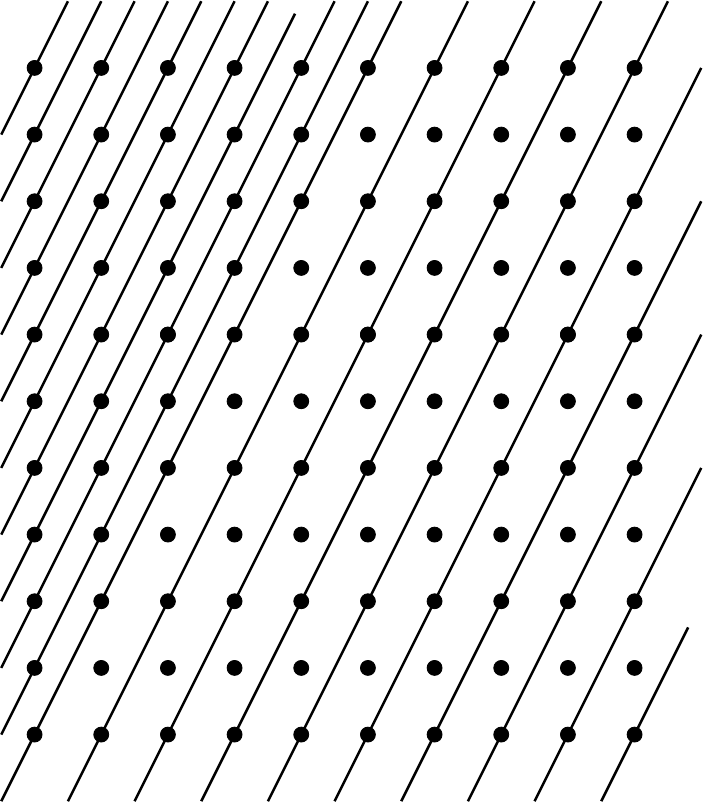}}
\caption{For steep lines, we expect more points on the bottom row to be incident to lines that contain $\lceil n^{1-\alpha}/s \rceil$ points, rather than points on the leftmost column. }
\label{fi:steep}
\end{figure}

The columns and rows $[n^\alpha]\times [n^{1-\alpha}]$ switch the roles that they had in the analysis of the case of non-steep slopes. 
On the leftmost column, at most $s$ points are on lines with slope $s/t$ that are incident to $\lceil n^{1-\alpha}/s \rceil$ points of $[n^\alpha]\times [n^{1-\alpha}]$.
On the bottom row, $n^\alpha - t(\lceil n^{1-\alpha}/s \rceil-1)$ points are on lines with slope $s/t$ incident to $\lceil n^{1-\alpha}/s \rceil$ points of $[n^\alpha]\times [n^{1-\alpha}]$. 
See Figure \ref{fi:steep}.

We rearrange $s/t> n^{1-2\alpha}$ as $t < s/n^{1-2\alpha}$.
When $s= o(n^{2/3-\alpha})$, the number of proper lines with slope $s/t$ is 
\[ \Theta(s\cdot t\cdot n^{1/3}) = O(s^2/n^{1-2\alpha}\cdot n^{1/3}) = o(n^{4/3-2\alpha}/n^{1-2\alpha}\cdot n^{1/3}) = o(n^{2/3}). \]

By the above, when $s= o(n^{2/3-\alpha})$, every parallel family is of size $o(n^{2/3})$.
Since there are $O(n^{1/3})$ slopes with proper lines, such values of $s$ lead to $o(n^{4/3})$ incidences. 

We now consider the case where $s= \Theta(n^{2/3-\alpha})$.
In this case, the lines that contain $\lceil n^{1-\alpha}/s\rceil$ points of $[n^\alpha]\times [n^{1-\alpha}]$ are proper.
Imitating the analysis of the non-steep case, each of the leftmost $t$ columns contains $n^{1-\alpha} - s\cdot n^{1/3}/k$ proper lines with slope $s/t$.
Each of the bottom $s$ rows contains $n^\alpha-tn^{1/3}/k$ points that are incident to such lines. 
By taking a sufficiently large $k$, since $s= \Theta(n^{2/3-\alpha})$, and since $t < s/n^{1-2\alpha}$, the number of proper lines with slope $s/t$ is 
\[ s\cdot(n^\alpha-tn^{1/3}/k) + t\cdot(n^{1-\alpha} - s\cdot n^{1/3}/k) = \Theta(sn^{\alpha} +t\cdot n^{1-\alpha})= \Theta(n^{2/3}). \] 

Since $s= \Theta(n^{2/3-\alpha})$, we may set $k_s\in \NN$ such that $n^{2/3-\alpha}/ k_s\le s\le n^{2/3-\alpha} \cdot k_s$.
After fixing a value for $s$, the number of valid values for $t$ is $\phi_{s/n^{1-2\alpha}}(s)$.
By Lemma \ref{eq:TotientProps}(d), Lemma \ref{eq:TotientProps}(e), and Lemma \ref{eq:TotientProps}(a), the amount of steep rich slopes is
\begin{align*} 
\sum_{s=n^{2/3-\alpha}/k_s}^{n^{2/3-\alpha}\cdot k_s} &\phi_{s/n^{1-2\alpha}}(s) = \sum_{s=n^{2/3-\alpha}/k_s}^{n^{2/3-\alpha}\cdot k_s} \left(\frac{\phi(s)}{n^{1-2\alpha}}+O(2^{w(s)})\right) \\[2mm]
&= \frac{1}{n^{1-2\alpha}}\cdot \left(\sum_{s=1}^{n^{2/3-\alpha}\cdot k_s} \phi(s)-\sum_{s=1}^{n^{2/3-\alpha}/ k_s} \phi(s)\right) +O\left(\sum_{s=1}^{n^{2/3-\alpha}\cdot k_s} 2^{w(s)}-\sum_{s=1}^{n^{2/3-\alpha}/ k_s} 2^{w(s)}\right)\\[2mm]
&=\Theta\left(\frac{n^{4/3-2\alpha}}{n^{1-2\alpha}}(k_s^2-1/k_s^2)\right)+O\left(n^{2/3-\alpha}(k_s^2-1/k_s^2)\log\log n\right)= \Theta(n^{1/3}).
\end{align*}

We conclude that there are $\Theta(n^{1/3})$ families of $\Theta(n^{2/3})$ proper parallel lines with steep slopes.
These form $\Theta(n^{4/3})$ incidences. 

\parag{The $y$-intercepts of a parallel family.}
We consider a rich slope $s/t$.
Each proper line with slope $s/t$ is incident to a point from the $t$ leftmost columns or from the $s$ bottom rows of $[n^\alpha]\times [n^{1-\alpha}]$. 
Such a line that is incident to the point $(p_x,p_y)$ is defined by $y=\frac{s}{t}\cdot x+p_y-p_x\cdot \frac{s}{t}$. 
That is, the $y$-intercept is $p_y-p_x\cdot \frac{s}{t}$.
By the above, if $(p_x,p_y)$ is on one of the $t$ leftmost columns, then $p_x\in [t]$ and $p_y\in [n^{1-\alpha}-sn^{1/3}/k]$. 
If $(p_x,p_y)$ is on one of the $s$ bottom rows, then $p_x\in [n^\alpha-tn^{1/3}/k]$ and $p_y\in [s]$. 

Simplifying the preceding paragraph, the set of $y$-intercepts is
\[ \left\{j-i\cdot \frac{s}{t}\ :\  i\in [t],\ j\in [n^{1-\alpha}-sn^{1/3}/k],\ \text{ or }\ i\in [n^\alpha-tn^{1/3}/k],\ j\in [s]\right\}.\]
Assuming that $k$ is sufficiently large, the total number of possible values for $(p_x,p_y)$ is 
\[ \Theta(tn^{1-\alpha} + sn^{\alpha}) = \Theta(n^{2/3}). \]
The calculation depends on whether we are in the steep case or in the non-steep case.
Both cases repeat calculations that were already done above. 

Since there are $\Theta(n^{2/3})$ possible $y$-intercepts and the slope is rich, the $y$-intercepts of the slope must be a constant portion of all possible options.
\end{proof}

\section{Half-lattices, generalized half-lattices, and generalized lattices} \label{sec:generalizations}

In this section, we prove our structural results for lattice generalizations. 
Most of the technical work for these proofs is in Section \ref{sec:MultEnergy}, and here we add the final details.
We begin with the case of half-lattices. 
Recall that a Cartesian product $A \times B$ is a \emph{half-lattice} if at least one of $A$ and $B$ is an arithmetic progression. 
\vspace{2mm}

\noindent {\bf Theorem \ref{th:HalfLatticeConcurrentOnLattice}.}
\emph{Consider $\alpha>1/3$ and $B\subset \RR$ such that $|B|=n^{1-\alpha}$. 
Then the concurrent case of Theorem \ref{th:StructuralSzemTrot}(a) cannot occur with the half-lattice $[n^\alpha] \times B$. }
\begin{proof}
Consider a concurrent family of $\Theta(n^{\gamma})$ proper lines with center $p\in \RR^2$.
We denote this set of concurrent lines as $\lines$ and translate $\RR^2$ so that $p$ becomes the origin.
Let $A$ be the set of $x$-coordinates of the half-lattice after the translation. 
Abusing notation, we denote as $B$ and $\lines$ the sets obtained after the translation.

We remove axis-parallel lines from $\lines$.
Since $\lines$ contains at most two such lines, this does not affect the asymptotic size of $\lines$.
If $0\in B$ then we remove $0$ from $B$.
Since there are no axis-parallel lines in $\lines$, this does not remove any incidences from $(A\times B)\times \lines$, except possibly at the origin.

Fix a line $\ell\in \lines$.
Since $\ell$ is not axis-parallel, there exists $s\in \RR\setminus\{0\}$ such that $\ell$ is defined as $y=s\cdot x$. 
Given two points $(a,b),(a',b')\in A\times B$ that are on $\ell$, we have that $b=s\cdot a$ and $b' = s\cdot a'$.
Combining these equations leads to $a\cdot b' = a' \cdot b$.
That is, the quadruple $(a,b',a',b)\in A \times B \times A \times B$ contributes to $E^\times(A,B)$. 

Every point of $(A\times B)\setminus \{p\}$ is incident to at most one line of $\lines$.
Thus, different lines of $\lines$ cannot lead to the same quadruple that contributes to $E^\times(A,B)$. 
This leads to 
\begin{equation} \label{eq:MultEneryLower} 
E^\times(A,B) =\Omega(|\lines|\cdot n^{2/3}) = \Omega(n^{\gamma+2/3}).
\end{equation}

Theorem \ref{th:multEnergy} states that $E^{\times}(A, B) =O(n^{1 +\eps}+n^{2-2\alpha})$.
The relevant part of the theorem depends on the $x$-coordinate of $p$ before the translation. 
Combining this with \eqref{eq:MultEneryLower} implies that $\gamma\le \max\{1/3+\eps, 4/3-2\alpha\}$.
Since $\alpha>1/3$, we obtain that $\gamma<1-\alpha$.
That is, a family of concurrent lines with point of concurrency $p$ is of size $o(n^{1-\alpha})$.
The concurrent case of Theorem \ref{th:StructuralSzemTrot}(a) considers concurrent families of size $\Omega(n^{1-\alpha})$ and is thus impossible. 
To complete the proof, we note that the above holds for every $p\in \RR$.
\end{proof}

We next prove our results for generalized lattices and generalized half-lattices.
The proofs rely on the same ideas as the proof of Theorem \ref{th:HalfLatticeConcurrentOnLattice}.
However, since we do not have a variant of Theorem \ref{th:multEnergy} for this case, we obtain weaker bounds.
We recall that a Cartesian product $A\times B$ is a \emph{generalized lattice} if both $A$ and $B$ are constant-dimension generalized arithmetic progressions. 
\vspace{2mm}

\noindent {\bf Theorem \ref{th:ConcurrentSmallSumSet}.}
\emph{For $1/3< \alpha < 1/2$, let $A,B\subset \RR$ satisfy $|A|=n^{\alpha}$ and $|B|=n^{1-\alpha}$. \\[2mm]
(a) If $A\times B$ is a generalized lattice, then every concurrent family is of size $O(n^{1/3}\log n)$. \\[2mm]
(b) If $|B+B|=O(|B|)$ then every concurrent family is of size $O(n^{1/3+\alpha/2}\log^{1/2} n)$. \\[2mm]
(c) If $|A+A|=O(|A|)$ then every concurrent family is of size $O(n^{5/6-\alpha/2}\log^{1/2} n)$.}
\begin{proof}
(a) Assume for contradiction that there exists a family $\lines$ of $k\cdot n^{1/3}\log n$ concurrent proper lines. 
We translate $\RR^2$ so that the point of concurrency of $\lines$ become the origin.
A translation does not affect incidences, $|A+A|$, or $|B+B|$.

Let $\ell\in \lines$ be a non-axis-parallel line.
There exists $s\in \RR\setminus\{0\}$ such that $\ell$ is defined as $y=s\cdot x$. 
Given two points $(a,b),(a',b')\in (A\times B)\setminus\{(0,0)\}$ that are on $\ell$, we have that $b=s\cdot a$ and $b' = s\cdot a'$.
Combining these equations leads to $a\cdot b' = a' \cdot b$.
That is, the quadruple $(a,b',a',b)\in A \times B \times A \times B$ contributes to $E^\times(A,B)$. 
Excluding the origin, every pair of points on $\ell$ yield a different quadruple. 
Since $\ell$ is proper, it leads to $\Theta(n^{2/3})$ such quadruples. 

Every point of $A\times B\setminus\{0\}$ is incident to at most one line of $\lines$.
Thus, different lines of $\lines$ cannot lead to the same quadruple that contributes to $E^\times(A,B)$. 
This leads to 
\begin{equation} \label{eq:MultEneryLower2} 
E^\times(A,B) =\Omega(|\lines|\cdot n^{2/3}) = \Omega(k n\log n).
\end{equation}

For $q\in \RR$ and a finite $C\subset \RR$, we define 
\begin{equation*} 
r_C(q) = |\{(c,c')\in C^2\ :\ c/c'=q\}|.
\end{equation*}
If $0\in A$ or $0\in B$, then we remove 0 from those sets. 
These removals decrease $E^\times(A,B)$ by $O(n^{2\alpha}+n^{2-2\alpha})$. 
When $1/3< \alpha < 2/3$, this decrease is asymptotically smaller than the the lower bound of \eqref{eq:MultEneryLower2}, so it is negligible.
A quadruple $(a,b',a',b)\in A \times B \times A \times B$ that contributes to $E^\times(A,B)$ satisfies $a\cdot b' = a' \cdot b$.
Rearranging leads to $a/a'=b/b'$.
This implies that 
\begin{equation} \label{eq:AltMultEnergy}
E^{\times}(A,B)=\sum_{q\in \RR} r_A(q)\cdot r_B(q).
\end{equation}

Lemma \ref{le:Solymosi} implies that $E^\times(A) = O(|A+A|^2\log|A|) = O(n^{2\alpha}\log n)$ and that $E^\times(B) = O(n^{2-2\alpha}\log n)$.
By the Cauchy-Schwarz inequality and applying \eqref{eq:AltMultEnergy} multiple times, we obtain that 
\begin{align}
    E^{\times}(A,B) &= \sum_{q\in \RR} r_A(q) r_B(q) \le \left(\sum_{q\in \RR} r_A(q)^2\right)^{1/2}\cdot \left(\sum_{q\in \RR} r_B(q)^2\right)^{1/2} \nonumber \\[2mm]
    &= E^{\times}(A)^{1/2}\cdot E^{\times}(B)^{1/2}  =  O\left(n^{\alpha}\log^{1/2} n \cdot n^{1-\alpha}\log^{1/2} n\right) = O\left(n\log n\right). \label{eq:BipartiteEnergy(a)}
\end{align}

When $k$ is sufficiently large, the above contradicts \eqref{eq:MultEneryLower2}, so there is no family of $k\cdot n^{1/3}\log n$ concurrent proper lines.

(b) We imitate the proof of part (a).
In this case, \eqref{eq:MultEneryLower2} is replaced with 
\begin{equation*}  
E^\times(A,B) =\Omega(|\lines|\cdot n^{2/3}) = \Omega(kn^{1+\alpha/2}\log^{1/2} n).
\end{equation*}

By definition, we have that $E^\times(A)\le |A|^3 = n^{3\alpha}$.
Adapting \eqref{eq:BipartiteEnergy(a)} leads to 
\begin{align*}
    E^{\times}(A,B) = O\left(n^{3\alpha/2}\cdot n^{1-\alpha}\log^{1/2} n\right) = O\left(n^{1+\alpha/2}\log^{1/2}n\right).
\end{align*}
We end up with the same contradiction as in part (a).

(c) We again imitate the proof of part (a).
In this case, \eqref{eq:MultEneryLower2} is replaced with 
\begin{equation*}  
E^\times(A,B) =\Omega(|\lines|\cdot n^{2/3}) = \Omega(kn^{(3-\alpha)/2}\log^{1/2} n).
\end{equation*}

By definition, we have that $E^\times(B)\le |B|^3 = n^{3-3\alpha}$.
Adapting \eqref{eq:BipartiteEnergy(a)} leads to 
\begin{align*}
    E^{\times}(A,B) = O\left(n^{\alpha}\log^{1/2}n \cdot n^{(3-3\alpha)/2}\right) = O\left(n^{(3-\alpha)/2}\log^{1/2}n\right).
\end{align*}
We end up with the same contradiction as in part (a).
\end{proof}

\end{document}